\numberwithin{equation}{section}
\numberwithin{footnote}{section}
\newtheorem{Theorem}{Theorem}
\newtheorem{cor}{Corollary}
\newtheorem{lemma}{Lemma}
\newcommand{\bp}{\begin{proof}}
\newcommand{\ep}{\end{proof}}
\newcommand{\be}{\begin{equation}}
\newcommand{\ee}{\end{equation}}
\newcommand{\bes}{\begin{equation*}}
\newcommand{\ees}{\end{equation*}}
\newcommand{\mbf}{\mathbf}
\newcommand{\bdx}{\mathbf{x}}
\newcommand{\bsy}{\boldsymbol}
\begin{document}

\date{\small\textsl{\today}}
\title{
Radial polynomials as alternatives to smooth radial basis functions  and their applications
}

\author{Fatemeh Pooladi 
\large , Hossein Hosseinzadeh$^*$   \\
\begin{footnote}{Corresponding author. \newline 
{\em  E-mail addresses:}
\newline Fpooladi13@mehr.pgu.ac.ir (F. Pooladi).
\newline h$_-$hosseinzadeh@aut.ac.ir \& hosseinzadeh@pgu.ac.ir (H. Hosseinzadeh),
}$\vspace{.2cm} $
\end{footnote}
\small{\em  Department of Mathematics, Persian Gulf University, Bushehr, Iran.}
\vspace{-1mm}} \maketitle

\begin{abstract}
Because of the high approximation power and simplicity of computation of smooth radial basis functions (RBFs), in recent decades they have received much attention for function approximation.  These RBFs contain a shape parameter that regulates the relation between their accuracy and stability. A difficulty in approximation via smooth RBFs is optimal selection of their shape parameter. The aim of this paper is to introduce an alternative for smooth RBFs, which, in addition to overcoming this difficulty, its approximation power is almost equal to RBFs. Motivated by Tylor expansion of smooth RBFs that contains only even power polynomials, a linear space of radial polynomials of degree $2n$, denoted by $\mathcal{H}_n$, is introduced and its basis functions are determined. 
Some sets of radial polynomial basis functions are introduced; one is more appropriate for theoretical analyzis and one is perfect for computational purposes.
Theoretical and numerical results presented in this paper show the new linear space leads to stable and accurate numerical results for interpolation scattered data and solving partial differential equations (PDEs).
\vspace{.5cm}\\
\textbf{{\em Keywords}}: 
Numerical approximation, 
Smooth radial basis functions, 
Partial differential equations, 
radial polynomial basis functions. \\
\end{abstract}

MSC [2000]: 65D12, 65D05, 65N99

\section{Introduction}
Radial basis functions (RBFs) have gained the attention of researchers because they are able to solve partial differential equations (PDEs) and boundary value problems (BVPs) defined on computational domains with complicated geometries, easily. Also, RBFs are successfully applied for the classification of big data in high dimensional domains \cite{chen2017research, soleymani2023error, wu2021optimization}. 
Infinitely smooth RBFs may yield very accurate numerical results when the unknown function is sufficiently smooth \cite{wendland2004scattered}. The accuracy of infinitely smooth RBFs mainly depends on two factors: Kernel function $\phi$ and shape parameters, $\epsilon$. In RBF approximation, the shape parameter should be determined by the user, and the highest accuracy is often obtained at small shape parameters, which may yield unstable results \cite{mongillo2011choosing}.
A significant number of researchers have focused on the enhancement of stability of flat RBFs, theoretically. Fornberg et al. \cite{fornberg2004stable} used the contour-Pad$\acute{e}$ algorithm to obtain the RBF interpolating function and Gonzalez et al. \cite{gonzalez2015laurent} treat singularity of the RBF matrix and compute the inverse of the RBF matrix by Laurent expansion. Both studies found that the interpolating function obtained by infinitely smooth RBFs converge to polynomial functions when $\epsilon$ tends to zero. Moreover, Driscoll and Fornberg \cite{driscoll2002interpolation} show interpolation function converges to a polynomial function that interpolates the same data when $\epsilon$ tends to zero. 
Numerical studies by Boyd and Alfaro \cite{boyd2013constructing, boyd2013hermite, boyd2010error} show that in one dimensional, the cardinal functions of Gaussian RBFs are approximately equal to the product of some functions to cardinal functions of Lagrangian polynomials. Then linear spaces of Gaussian RBFs and polynomials are very close to each other in this situation.

Adding polynomials to radial basis functions is also one approach to enhance the approximation power of RBFs. For instance an orthogonal augment radial basis function (OA-RBF) method is proposed in \cite{wu2019global} for estimating a general class of Sobol indices. Also the combination of polyharmonic splines (PHS) and polynomials with high degree is applied in \cite{bayona2019insight, bayona2019role} as a powerful and robust numerical approach for local interpolation. Polyharmonic splines are used as radial basis function with augmented polynomials in \cite{orucc2022strong} to solve coupled damped Schrödinger system, numerically. In \cite{shahane2021high} and \cite{bartwal2022application} PHS with appended polynomials is applied for the solution of incompressible Navier-Stokes and heat conduction equations, respectively. Besides, there are several researches which highlight appended polynomials to RBFs give rapid convergence \cite{ cao2022polynomial, lamichhane2023localized, shahane2021high, oruc2022radial}.



Taylor expansion of infinitely smooth RBFs contain only even power of radius, $r$. This fact motivate us to search for a space of polynomials containing just even powers to approximate the RBFs. So a linear space of polynomials of degree less than or equal to $2n$, denoted by  $\mathcal{H}_n$, is proposed, where by increasing the number of its basis, distance from the RBFs to it vanishes, exponentially.
Dimension of the new linear space is determined and its basis functions are obtained, analytically. Then the distance from smooth RBFs to $\mathcal{H}_n$ can be calculated, numerically. Numerical results show that although  $\mathcal{H}_n \subset \mathcal{P}_{2n}$ but the distances from the RBFs to the both linear spaces are nearly equal. Then $\mathcal{H}_n$ may be supposed as the smallest subspace of $\mathcal{P}_{2n}$ where smooth RBFs tend to it as fast as possible. Since usual basis functions of $\mathcal{H}_n$ lead unstable numerical results, some new basis functions also are proposed in this paper for the linear space where enhance the stability and accuracy of the numerical results, significantly. 

The rest of the paper is organized as follows. The relation between the RBFs and polynomials is studied in Section \ref{sec2}. Then, in Section \ref{sec3} the new linear space of radial polynomials is introduced and its dimension and basis functions are obtained. The obtained basis functions are replaced by more stable ones in Section \ref{sec4} to enhance robustness of the linear space. Some numerical studies are presented in Section \ref{sec5} to show ability of the new linear space versus smooth RBFs. Finally, the paper is completed by Section \ref{sec6} including some conclusions and final remarks.

\section{Radial basis functions }\label{sec2}
Using RBFs in scattered data approximation was proposed by Hardy \cite{hardy1971multiquadric}. 
From a point of review, RBFs can be divided into three groups, infinitely smooth, piecewise smooth and compact support RBFs. Some well known RBFs are listed in Table \ref{tab1}.
\begin{table}[ht]
  \begin{center}
    \caption{Some important RBFs presented in the literature. }\label{tab1}
    \begin{tabular}{lll} 
      \hline
      \hline
      RBF & name & function \\
      \hline
       infinitely smooth  & Gaussian (GA)  & $\exp(-\epsilon^2 r^2)$      \\
         & Multiquadric (MQ) & $\sqrt{1+\epsilon^2 r^2}$  \\
         & Inverse Multiquadric (IMQ) & $1/\sqrt{1+\epsilon^2 r^2}$  \\
         & Inverse Quadric (IQ) & $1/(1+\epsilon^2 r^2)$  \\
      \hline
      piecewise smooth & Thin plate spline (TPS)  &  $r^{2n} \ln(r)$   \\
       & Poly-harmonic Spline (PHS) & $r^{2n+1}$ \\
      \hline
      compactly support & Wendland $C^0$ & $(1-r)_+^2$              \\
        & Wendland $C^2$ & $(1-r)_+^4 (1+4r)$  \\
      \hline
      \hline
    \end{tabular}
  \end{center}
\end{table}

Suppose given data of the form $(\mbf{x}_i , b_i)$ for $i = 1, 2, . . . , N$ where $\mbf{x}_i\in\Omega\subseteq\mathbb{R}^d$ and $u_i\in\mathbb{R}$. We seek for an interpolant $\bar{u}(\mbf{x})$ which satisfies
\begin{align}\label{eq2.1}
\bar{u}(\mbf{x}_i)=u_i  ~, ~~~ i=1, 2, ..., N.
\end{align}
In RBF interpolation approach, $\bar{u}(\mbf{x})$ is a linear function of radial basis functions $\phi(r_1(\mbf{x}))$, $\phi(r_2(\mbf{x}))$, $...,\phi(r_N(\mbf{x}))$ as
\begin{equation}\label{eq2.2}
\bar{u}(\bdx) = \sum_{i=1}^N  \lambda_i \phi(r_i(\mbf{x})) ,
\end{equation}
such that $\lambda_i$ are constants and 
$
r_i(\mbf{x})=\| \mbf{x} - \mbf{x}_i \| .
$
In fact, interpolant $\bar{u}$ belongs to the space 
\be\label{S}
S=\langle \phi(r_1(\mbf{x})) , \phi(r_2(\mbf{x})) , ..., \phi(r_N(\mbf{x})) \rangle . 
\ee
The value of RBF $\phi(r_i(\mbf{x}))$ depends on kind of the RBFs that used, the distance from the input data $\mbf{x}$ to the collocation point $\mbf{x}_i$ and shape parameter, $\epsilon$. Equation (\ref{eq2.1}) can be stated as a system of linear equations
\begin{equation}\label{eq2.4}
\boldsymbol{\Phi} \boldsymbol{\lambda} = \mathbf{b} .
\end{equation}
This system is non-singular when positive definite RBFs (such as Gaussian RBF) with constant shape parameters are used \cite{fasshauer2007meshfree, wendland2004scattered}. It has been shown that the interpolant function admits spectral convergence when infinitely smooth RBFs are applied \cite{wendland2004scattered}.
However the accuracy and the stability of these RBFs depend on the number of data points and the value of $\epsilon$  \cite{wendland2004scattered, madych1992miscellaneous, schaback1995error}. It is well-known that interpolation function $\bar{u}$ converges to a polynomial function, exponentially, when $\epsilon$ is sufficiently small or $N$ is sufficiently large \cite{driscoll2002interpolation, fornberg2007runge}.

\section{Proposed linear space of radial polynomials}\label{sec3}
This section introduces the new linear space of radial polynomials to be used as an alternative for linear space $S$ introduced in Equation (\ref{S}). The new linear space is free of shape parameter and it has high power of approximation.

\subsection{New linear space}
As is mentioned in equations (\ref{eq2.2}) and (\ref{S}) interpolation function $\bar{u}$ belongs to the linear space produced by RBFs, $S$.  Considering Taylor expansion of a smooth RBF, $\phi(r_i)$, we have
\begin{align}\label{eq3.1}
\phi(r_i) \simeq f_n(r_i) = 1+c_2\epsilon^2 r_i^2 + c_4 \epsilon^4 r_i^4  + \cdots + c_{2n} \epsilon^{2n} r_i^{2n} .
\end{align} 
Now instead of $S$, we consider the space generated by polynomials $f_n(r_i)$, as follows
\begin{align}\label{eq.Hh}
{\mathcal{H}}=\langle f_n(r_1), f_n(r_2), ..., f_n(r_N)\rangle .
\end{align}
One can approximate the unknown function $\bar{u}$ in $\mathcal{H}$ instead of $S$, but, because of complexity of computation, the space $\mathcal{H}_n$ is proposed as follows: 
\be\label{Hn}
\mathcal{H}_{n}=\langle r_1^{2n}, r_2^{2n}, ... , r_N^{2n}\rangle .
\ee
Using $\mathcal{H}_n$ for approximation has less computational cost than $\mathcal{H}$ and in continue it is proved that $\mathcal{H} \subseteq \mathcal{H}_n$.

\subsection{Relation between $\mathcal{H}_n$ and $\mathcal{H}$}
From equations (\ref{eq3.1}) and (\ref{Hn}) we find $\mathcal{H}$ and $\mathcal{H}_{n}$ both include polynomials of degree less than or equal to $2n$. In this subsection, it is proved by a theorem that, $\mathcal{H}$ is embedded in $\mathcal{H}_n$ if $ \mathcal{H}_{n}$ is full rank. 
Note that $\mathcal{H}_{n}$ is full rank if $r_0^{2n}\in\mathcal{H}_{n}$ for each $\bdx_0\in\Omega$. 

\begin{lemma}\label{le.dim2}
If $\mathcal{H}_n$ is full rank then $\mathcal{H}_{n-1}$ is also full rank for $n\geq 1$.
\end{lemma}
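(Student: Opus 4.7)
The plan is to exploit the Laplacian, which lowers the exponent of a radial polynomial by exactly two. Specifically, for any center $\mathbf{x}_i$ in $\mathbb{R}^d$, a direct computation gives
\begin{equation*}
\Delta \|\mathbf{x}-\mathbf{x}_i\|^{2n} \;=\; 2n(2n+d-2)\,\|\mathbf{x}-\mathbf{x}_i\|^{2n-2},
\end{equation*}
and the scalar $2n(2n+d-2)$ is strictly positive whenever $n\geq 1$ and $d\geq 1$. This identity is the engine of the whole argument: the Laplacian maps the spanning set of $\mathcal{H}_n$ onto a nonzero multiple of the spanning set of $\mathcal{H}_{n-1}$.

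With this in hand I would proceed as follows. By hypothesis $\mathcal{H}_n$ is full rank, so for any chosen $\mathbf{x}_0\in\Omega$ there exist scalars $c_1,\dots,c_N$ (depending on $\mathbf{x}_0$) with
\begin{equation*}
r_0(\mathbf{x})^{2n} \;=\; \sum_{i=1}^{N} c_i \, r_i(\mathbf{x})^{2n}
\end{equation*}
as an identity in $\mathbf{x}$. Applying the Laplacian in $\mathbf{x}$ to both sides, using linearity and the identity above, yields
\begin{equation*}
2n(2n+d-2)\,r_0(\mathbf{x})^{2n-2} \;=\; \sum_{i=1}^{N} c_i \cdot 2n(2n+d-2)\, r_i(\mathbf{x})^{2n-2},
\end{equation*}
and dividing through by the nonzero factor $2n(2n+d-2)$ shows $r_0^{2(n-1)}\in\mathcal{H}_{n-1}$. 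Since $\mathbf{x}_0\in\Omega$ was arbitrary, this is exactly the full-rank property for $\mathcal{H}_{n-1}$.

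I do not expect a serious obstacle here: the only delicate point is verifying that the Laplacian prefactor $2n(2n+d-2)$ is nonzero, which should be stated explicitly and would only fail in the degenerate regime $n=0$ or in spatial dimension $d\leq 0$ (outside the scope of the paper). One could alternatively give a purely algebraic proof by expanding $r_i^{2n}=(r_i^2)^n$ as a polynomial in the components of $\mathbf{x}$ and comparing coefficients, but the Laplacian route is cleaner and directly uses the structure of radial polynomials that the paper emphasises.
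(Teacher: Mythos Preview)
Your argument is correct and follows exactly the paper's route: assume the full-rank representation $r_0^{2n}=\sum_i \lambda_i r_i^{2n}$ and apply the Laplacian to drop the exponent by two. If anything you are more careful than the paper, which simply writes $\nabla^2 r_0^{2n}=r_0^{2n-2}$ without displaying the common factor $2n(2n+d-2)$ that you compute and then cancel.
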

\begin{proof}
Suppose $\mathcal{H}_n$ is full rank, so
for any computational point $\mathbf{x}_0\in\mathbb{R}^d$ there are coefficients $\lambda_1, ...,\lambda_N$ in $\mathbb{R}$ such that
\begin{align}\label{eq.sum1}
r_0^{2n} = \sum_{i=1}^N \lambda_i \, r_i^{2n} .
\end{align}
Now imposing Laplace operator, $\nabla^2$, on both sides of Equation (\ref{eq.sum1}) results in 
\begin{align}\label{eq.sum2}
r_0^{2n-2} = \sum_{i=1}^N \lambda_i \, r_i^{2n-2} ,
\end{align}
which shows that $\mathcal{H}_{n-1}$ is also full rank.
\end{proof}

\begin{cor} \label{co.dim} 
If $\mathcal{H}_n$ is full rank then $\mathcal{H}_{j}$ also is full rank for each $j\leq n$.
\end{cor}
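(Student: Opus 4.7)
The plan is to deduce the corollary directly from Lemma \ref{le.dim2} by a finite descending induction on the index. Concretely, I would fix an arbitrary $j \leq n$ and argue that, starting from the hypothesis that $\mathcal{H}_n$ is full rank, one can peel off the indices $n, n-1, \ldots, j+1$ one at a time, each time invoking Lemma \ref{le.dim2} to pass from full rank of $\mathcal{H}_k$ to full rank of $\mathcal{H}_{k-1}$.

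More precisely, I would set up the induction as follows. Let $P(k)$ be the statement ``$\mathcal{H}_k$ is full rank.'' The base case $P(n)$ is given by the hypothesis of the corollary. For the inductive step, assume $P(k)$ holds for some $k$ with $j+1 \leq k \leq n$; then Lemma \ref{le.dim2} (applied with its $n$ replaced by $k$, which requires only $k \geq 1$, and this is ensured since $k \geq j+1 \geq 1$ whenever $j \geq 0$) yields $P(k-1)$. Iterating this descent $n - j$ times produces $P(j)$, which is exactly the claim.

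There is essentially no obstacle here: the hard work was already done in Lemma \ref{le.dim2}, where the Laplacian identity $\nabla^{2} r^{2n} \propto r^{2n-2}$ was used to step down by one degree. The corollary is just the observation that a one-step descent can be iterated any finite number of times. The only thing worth being careful about is that each invocation of the lemma needs $k \geq 1$, which is automatic as long as we stop the descent at $j \geq 0$, the natural range of indices. So the proof will be quite short, essentially a single sentence invoking induction together with Lemma \ref{le.dim2}.
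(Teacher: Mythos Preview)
Your proposal is correct and matches the paper's own proof essentially verbatim: the paper likewise reduces the corollary to Lemma~\ref{le.dim2} together with a descending induction from $n$ down to the target index. There is nothing to add.
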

\begin{proof}
It can be proved by Lemma \ref{le.dim2} and induction on $j$ when $j$ decreases from $n$ to $1$ step by step.
\end{proof}

\begin{lemma}\label{le.dim22}
If $\mathcal{H}_n$ is full rank then $\mathcal{H}_{n-1}\subseteq \mathcal{H}_n$ for $n\geq 1$.
\end{lemma}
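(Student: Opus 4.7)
The plan is to reduce the claim to the full-rank hypothesis of $\mathcal{H}_n$ by differentiating, in the \emph{centre} variable $\mathbf{x}_0$, the identity that expresses $r_0^{2n}$ as an element of $\mathcal{H}_n$. First I would record the key observation that the map
\[
F \colon \Omega \to \mathcal{H}_n, \qquad F(\mathbf{x}_0) := \|\mathbf{x}-\mathbf{x}_0\|^{2n},
\]
is well defined: by the full-rank assumption, $F(\mathbf{x}_0) = r_0^{2n}$ lies in $\mathcal{H}_n$ for every $\mathbf{x}_0 \in \Omega$, and the codomain $\mathcal{H}_n$ is a finite-dimensional, hence closed, linear subspace of polynomials in $\mathbf{x}$.

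Second, since $F$ is $C^\infty$ in $\mathbf{x}_0$ and $\mathcal{H}_n$ is closed and linear, every partial derivative of $F$ with respect to $\mathbf{x}_0$, being a limit of difference quotients of elements of $\mathcal{H}_n$, again lies in $\mathcal{H}_n$. In particular, $\nabla_{\mathbf{x}_0}^{2} F$ takes values in $\mathcal{H}_n$. Using the symmetry of $\|\mathbf{x}-\mathbf{x}_0\|$ in its two arguments, the very calculation that appears in the proof of Lemma \ref{le.dim2} yields
\[
\nabla_{\mathbf{x}_0}^{2}\,\|\mathbf{x}-\mathbf{x}_0\|^{2n} \,=\, \nabla_{\mathbf{x}}^{2}\,\|\mathbf{x}-\mathbf{x}_0\|^{2n} \,=\, 2n(2n+d-2)\, r_0^{2n-2},
\]
and the multiplicative constant is non-zero for $n \geq 1$, so $r_0^{2n-2} \in \mathcal{H}_n$ for every $\mathbf{x}_0 \in \Omega$.

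To conclude, I would specialise $\mathbf{x}_0 = \mathbf{x}_j$ for each centre $j = 1,\ldots,N$ to obtain $r_j^{2n-2} \in \mathcal{H}_n$, whence $\mathcal{H}_{n-1} = \langle r_1^{2n-2}, \ldots, r_N^{2n-2}\rangle \subseteq \mathcal{H}_n$. The main obstacle I anticipate is justifying the passage from ``$r_0^{2n}\in\mathcal{H}_n$ for every $\mathbf{x}_0$'' to ``$\nabla_{\mathbf{x}_0}^{2} r_0^{2n}\in\mathcal{H}_n$'': the coefficients $\lambda_i(\mathbf{x}_0)$ in any representation $r_0^{2n} = \sum_i \lambda_i\, r_i^{2n}$ need not be smoothly or even uniquely chosen, so I would avoid differentiating them directly and instead exploit the closedness of the finite-dimensional subspace $\mathcal{H}_n$, which immediately delivers the required invariance of $\mathcal{H}_n$ under centre-differentiation.
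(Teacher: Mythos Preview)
Your argument is correct and essentially identical to the paper's: both apply the Laplacian to $r_0^{2n}$, realise it as a limit of difference quotients that lie in $\mathcal{H}_n$ by the full-rank hypothesis (shifting $\mathbf{x}$ and shifting the centre $\mathbf{x}_0$ are equivalent by the symmetry you note), and invoke closedness of the finite-dimensional subspace to pass to the limit. Your Laplacian constant $2n(2n+d-2)$ is in fact the correct one in general dimension $d$; the paper's factor $1/(4n^2)$ is specific to $d=2$, though this does not affect the validity of either argument.
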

\begin{proof}
Let $\mathbf{x}_0$ is a computational point in $\mathbb{R}^d$, and $\nabla^2$ is Laplace operator. Therefore
\begin{align}\label{eq3.10}
r_0^{2n-2} = \frac{1}{4n^2} \nabla^2 r_0^{2n} 
= \frac{1}{4n^2}  \lim_{h \rightarrow 0}\frac{1}{h^2} \left( \sum_{j=0:2d} \lambda_j \, r_0^{2n} (x + h \mathbf{v}_j) \right) ,
\end{align}
where $\lambda_j$ and $v_j$ are appropriate finite difference coefficients and vectors, respectively  for $j=0,1, 2, ...,2d$. Since polynomials $r_0^{2n} (x_0 + h \mathbf{v}_j)$ are belong to $\mathcal{H}_n$ and $\mathcal{H}_n$ is closed space (because it has finite dimension), then the limit presented in right hand side of Equation (\ref{eq3.10}) also belongs to $\mathcal{H}_n$ and consequently $r_0^{2n-2}\in \mathcal{H}_n$. So, $\mathcal{H}_{n-1} \subseteq \mathcal{H}_n$.
\end{proof}

\begin{cor}\label{co.sub}
If $\mathcal{H}_n$ is full rank, then $\mathcal{H}_j \subseteq \mathcal{H}_n$ for $j\leq n$.
\end{cor}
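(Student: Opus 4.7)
The plan is to chain together the two preceding results by a finite downward induction on the gap $n-j$. I would first invoke Corollary \ref{co.dim} to upgrade the hypothesis from ``$\mathcal{H}_n$ is full rank'' to ``$\mathcal{H}_k$ is full rank for every integer $k$ with $0 \leq k \leq n$.'' This is the key preparatory step: Lemma \ref{le.dim22} requires the larger space in the pair to be full rank, so before I can use it I need full-rank-ness at every intermediate level.

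Next I would apply Lemma \ref{le.dim22} repeatedly. For each index $k$ with $j+1 \leq k \leq n$, the lemma together with the full-rankness of $\mathcal{H}_k$ yields $\mathcal{H}_{k-1} \subseteq \mathcal{H}_k$. Stringing these inclusions together produces the chain
\begin{equation*}
\mathcal{H}_j \subseteq \mathcal{H}_{j+1} \subseteq \mathcal{H}_{j+2} \subseteq \cdots \subseteq \mathcal{H}_{n-1} \subseteq \mathcal{H}_n,
\end{equation*}
and transitivity of set inclusion gives the desired $\mathcal{H}_j \subseteq \mathcal{H}_n$. Formally, one can phrase this as a downward induction on $j$ starting from the trivial base case $j = n$ (where $\mathcal{H}_n \subseteq \mathcal{H}_n$) and using Lemma \ref{le.dim22} for the inductive step from $k$ to $k-1$.

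I do not anticipate any real obstacle: the two lemmas together have been engineered precisely so that this corollary is an immediate bookkeeping consequence. The only subtlety worth flagging is making sure the hypotheses of Lemma \ref{le.dim22} are in force at every step, which is exactly what Corollary \ref{co.dim} supplies; skipping that preparatory invocation and trying to apply Lemma \ref{le.dim22} directly with only ``$\mathcal{H}_n$ is full rank'' in hand would leave a gap for indices strictly between $j$ and $n$.
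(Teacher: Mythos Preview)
Your proposal is correct and follows essentially the same route as the paper: downward induction on $j$ using Lemma \ref{le.dim22} at each step. If anything, you are more explicit than the paper in first invoking Corollary \ref{co.dim} to guarantee full-rankness at every intermediate level, which is indeed what makes each application of Lemma \ref{le.dim22} legitimate.
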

\begin{proof}
It can be proved by Lemma \ref{le.dim22} and induction on $j$ when it decreases from $n$ to $1$ step by step.
\end{proof}

Now the important theorem can be achieved from the last corollary.
\begin{Theorem} \label{th1}
If $\mathcal{H}_n$ is full rank, then ${\mathcal{H}} \subseteq \mathcal{H}_{n}$.
\end{Theorem}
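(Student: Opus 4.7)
The plan is to reduce the theorem to Corollary \ref{co.sub} by unpacking the definition of $\mathcal{H}$ generator by generator. Since $\mathcal{H}=\langle f_n(r_1),\dots,f_n(r_N)\rangle$, it suffices to show that every $f_n(r_i)$ lies in $\mathcal{H}_n$, because $\mathcal{H}_n$ is a linear space.

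Fix $i\in\{1,\dots,N\}$. By the definition of $f_n$ in Equation (\ref{eq3.1}),
\begin{equation*}
f_n(r_i)=1+c_2\epsilon^2 r_i^2+c_4\epsilon^4 r_i^4+\cdots+c_{2n}\epsilon^{2n}r_i^{2n},
\end{equation*}
so $f_n(r_i)$ is a finite linear combination of the monomials $r_i^{2j}$ for $j=0,1,\dots,n$. Thus I only need to verify that each individual $r_i^{2j}$ belongs to $\mathcal{H}_n$. But $r_i^{2j}$ is, by definition, one of the generators of $\mathcal{H}_j=\langle r_1^{2j},\dots,r_N^{2j}\rangle$, so $r_i^{2j}\in\mathcal{H}_j$ trivially. (For $j=0$ this just says $1\in\mathcal{H}_0$.)

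Now I invoke the hypothesis: $\mathcal{H}_n$ is full rank. By Corollary \ref{co.sub}, $\mathcal{H}_j\subseteq \mathcal{H}_n$ for every $j\le n$, and hence $r_i^{2j}\in\mathcal{H}_n$ for all $j=0,1,\dots,n$. Since $\mathcal{H}_n$ is closed under linear combinations, the whole sum defining $f_n(r_i)$ lies in $\mathcal{H}_n$. As this holds for every $i$, the spanning set of $\mathcal{H}$ sits inside $\mathcal{H}_n$, giving $\mathcal{H}\subseteq\mathcal{H}_n$.

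There is no real obstacle here: all of the analytical work (the Laplacian argument that lowers the degree while staying inside $\mathcal{H}_n$, together with the downward induction on $j$) has already been packaged into Corollary \ref{co.sub}. The theorem is just the observation that the generators of $\mathcal{H}$ are polynomials in $r_i$ with only even powers up to $2n$, which is exactly the class of functions Corollary \ref{co.sub} certifies to lie in $\mathcal{H}_n$.
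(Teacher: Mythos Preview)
Your proof is correct and follows essentially the same route as the paper: invoke Corollary~\ref{co.sub} to obtain $\mathcal{H}_j\subseteq\mathcal{H}_n$ for all $j\le n$, then observe that each generator $f_n(r_i)$ of $\mathcal{H}$ is a linear combination of terms $r_i^{2j}\in\mathcal{H}_j\subseteq\mathcal{H}_n$. The paper phrases the same step as $\mathcal{H}_0+\cdots+\mathcal{H}_n=\mathcal{H}_n$, but the content is identical.
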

\begin{proof}
From Corollary \ref{co.sub} we have 
$\mathcal{H}_0 \subseteq \mathcal{H}_1 \subseteq ... \subseteq \mathcal{H}_{n-1} \subseteq \mathcal{H}_n$ where it leads
\begin{align*}
\mathcal{H}_{0} + \mathcal{H}_{1} + ... + \mathcal{H}_{n} = \mathcal{H}_{n} ,
\end{align*}
and consequently $f_n(r_i)\in\mathcal{H}_{n}$  for $i=1, 2, ..., N$, and  ${\mathcal{H}} \subseteq \mathcal{H}_{n}$.
\end{proof}

\subsection{Basis of $\mathcal{H}_n$}
Now we are going to find the basis functions of $\mathcal{H}_n$. From theorem \ref{th1} we have $\mathcal{H} \subseteq \mathcal{H}_{n}$ and consequently basis of $\mathcal{H}$ is a subset of basis of $\mathcal{H}_n$.
Note that elements of $\mathcal{H}_{n}$ are radial polynomials of degree $2n$ and its dimension is bounded by the dimension of polynomials of degree less than or equal to $2n$, denoted by $\mathcal{P}_{2n}$. We will see that the dimension of $\mathcal{H}_n$ is significantly smaller than the dimension of $\mathcal{P}_{2n}$ for $d \geq 2$. 
\begin{lemma}\label{le.f12}
If $r^2=\| \bdx \|^2$ and $m \leq n$, then dimension of linear space ${\langle{  r^2 \mathcal{P}_{m-1} - \mathcal{P}_{m}}\rangle}$ is equal to 
\begin{align}\label{md}
\begin{pmatrix}
m+d-2 \\
m-1
\end{pmatrix}
= \frac{(m+d-2)!}{(m-1) ! (d-1)!} .
\end{align}
\end{lemma}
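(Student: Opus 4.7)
The plan is to read the linear space $\langle r^2\mathcal{P}_{m-1}-\mathcal{P}_m\rangle$ as the image of $r^2\mathcal{P}_{m-1}$ in the quotient $\mathcal{P}_{m+1}/\mathcal{P}_m$, i.e., as $(r^2\mathcal{P}_{m-1}+\mathcal{P}_m)/\mathcal{P}_m$. This is the natural interpretation that matches the claimed dimension, and under this reading the whole computation collapses to one application of the rank-nullity theorem together with a Pascal identity.

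Concretely, I would introduce the linear map $T:\mathcal{P}_{m-1}\to\mathcal{P}_{m+1}/\mathcal{P}_m$ defined by $T(p)=r^2 p+\mathcal{P}_m$, whose image is precisely the space in question. Since $\mathbb{R}[x_1,\ldots,x_d]$ is an integral domain and $r^2=x_1^2+\cdots+x_d^2$ is nonzero, multiplication by $r^2$ is injective and satisfies $\deg(r^2 p)=\deg p+2$ for $p\ne 0$. Hence $r^2 p\in\mathcal{P}_m$ iff $\deg p\leq m-2$, giving $\ker T=\mathcal{P}_{m-2}$ (with the convention $\mathcal{P}_{-1}=\{0\}$). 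Rank-nullity therefore yields
\[
\dim\langle r^2\mathcal{P}_{m-1}-\mathcal{P}_m\rangle=\dim\mathcal{P}_{m-1}-\dim\mathcal{P}_{m-2}=\binom{m-1+d}{d}-\binom{m-2+d}{d},
\]
and a single application of Pascal's identity collapses this to $\binom{m+d-2}{d-1}=\binom{m+d-2}{m-1}$, which is exactly \eqref{md}. One can rephrase the same argument by noting that $T$ descends to an injection $\mathcal{P}_{m-1}/\mathcal{P}_{m-2}\hookrightarrow\mathcal{P}_{m+1}/\mathcal{P}_m$, and both quotients are canonically identified with spaces of homogeneous polynomials, making the count $\binom{m+d-2}{m-1}$ transparent as the number of monomials of total degree exactly $m-1$ in $d$ variables.

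I do not anticipate a real obstacle. The argument rests on only two elementary facts: that the polynomial ring is an integral domain (so multiplication by $r^2$ is injective) and the standard formula $\dim\mathcal{P}_k=\binom{k+d}{d}$. The only genuine subtlety is pinning down the meaning of the somewhat idiosyncratic notation $\langle r^2\mathcal{P}_{m-1}-\mathcal{P}_m\rangle$; the hypothesis $m\leq n$ plays no role in the count itself and is presumably stated only because the lemma will subsequently be invoked with indices arising in the description of a basis of $\mathcal{H}_n$.
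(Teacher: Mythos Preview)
Your proposal is correct and follows essentially the same route as the paper: the paper argues (more informally) that $\langle r^2 \mathcal{P}_{m-1} - \mathcal{P}_m\rangle = r^2 \langle\mathcal{P}_{m-1} - \mathcal{P}_{m-2}\rangle$ and then counts monomials of total degree exactly $m-1$ in $d$ variables. Your quotient-space and rank--nullity formulation is a cleaner packaging of the same identification, and your closing remark that $T$ descends to an injection $\mathcal{P}_{m-1}/\mathcal{P}_{m-2}\hookrightarrow\mathcal{P}_{m+1}/\mathcal{P}_m$ is exactly the paper's argument in more precise language.
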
 
\begin{proof}
Since $r^2 \in \mathcal{P}_{2}$ then $ r^2 \mathcal{P}_{m-1}$ is a set of polynomials of degree less than or equal to $m+1$ which have factor $r^2$ in their components. Subtracting $\mathcal{P}_{m}$ from this set, only polynomials of degree $m-1$ remain which are multiplied by $r^2$, i.e.  
\begin{align*}
\langle r^2 \mathcal{P}_{m-1} - \mathcal{P}_m\rangle =  r^2 \langle\mathcal{P}_{m-1} - \mathcal{P}_{m-2}\rangle .
\end{align*} 
Polynomial basis functions of $\langle\mathcal{P}_{m-1} - \mathcal{P}_{m-2}\rangle$ satisfy condition
\begin{align*}
\prod_{k=1:d} x_k^{\alpha_k} ~~~s.t.~~ \sum_{k=1:d} \alpha_k = m-1 ,
\end{align*} 
where $x_k$ is $k_-$th component of $\bdx$ for $k=1, 2, ...,d$. The number of these basis functions can be calculated via Equation (\ref{md}).
\end{proof}
\begin{Theorem}\label{le.dim}
Dimension of $\mathcal{H}_{n}$ is less than or equal to
\begin{align*}
h(n)= 2 \sum_{i=0:n-1} 
\begin{pmatrix}
i+d-1 \\
i
\end{pmatrix}
+
\begin{pmatrix}
n+d-1 \\
n
\end{pmatrix} ,
\end{align*}
for $n\geq 2$.
\end{Theorem}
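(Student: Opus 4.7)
The plan is to embed $\mathcal{H}_n$ into an explicit polynomial space $V$ whose dimension can be bounded using Lemma~\ref{le.f12}. The two key ingredients are the multinomial expansion of $r_i^{2n}$ and a telescoping filtration of $V$.

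First, applying the multinomial theorem to $r_i^2 = r^2 - 2\,\mathbf{x}\!\cdot\!\mathbf{x}_i + \|\mathbf{x}_i\|^2$ yields
\begin{equation*}
r_i^{2n} = \sum_{a+b+c=n}\frac{n!}{a!\,b!\,c!}\,(-2)^b\,\|\mathbf{x}_i\|^{2c}\; r^{2a}(\mathbf{x}\!\cdot\!\mathbf{x}_i)^b.
\end{equation*}
Since $(\mathbf{x}\!\cdot\!\mathbf{x}_i)^b$ is a polynomial in $\mathbf{x}$ of degree $b$ and $c\geq 0$ forces $a+b\leq n$, every summand lies in $r^{2a}\mathcal{P}_{n-a}$. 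Hence
\begin{equation*}
\mathcal{H}_n \;\subseteq\; V \;:=\; \mathcal{P}_n + r^2\mathcal{P}_{n-1} + r^4\mathcal{P}_{n-2} + \cdots + r^{2n}\mathcal{P}_0.
\end{equation*}

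Next, I introduce the filtration $W_0 = \mathcal{P}_n$ and $W_j = W_{j-1} + r^{2j}\mathcal{P}_{n-j}$, so that $W_n = V$. Observing that $r^{2j}\mathcal{P}_{n-j-1} = r^{2(j-1)}(r^2\mathcal{P}_{n-j-1}) \subseteq r^{2(j-1)}\mathcal{P}_{n-j+1} \subseteq W_{j-1}$, I get
\begin{equation*}
\dim W_j - \dim W_{j-1} \;\leq\; \dim\!\left(r^{2j}\mathcal{P}_{n-j}\big/r^{2j}\mathcal{P}_{n-j-1}\right) \;=\; \binom{n-j+d-1}{n-j},
\end{equation*}
where the equality follows from Lemma~\ref{le.f12} with $m=n-j+1$, together with the injectivity of multiplication by $r^{2j}$ on polynomials.

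Summing the increments and using the hockey-stick identity $\sum_{i=0}^{n-1}\binom{i+d-1}{i} = \binom{n+d-1}{n-1}$ together with Pascal's rule $\binom{n+d}{n} = \binom{n+d-1}{n} + \binom{n+d-1}{n-1}$, the bound collapses to
\begin{equation*}
\dim V \;\leq\; \binom{n+d}{n} + \sum_{i=0}^{n-1}\binom{i+d-1}{i} \;=\; \binom{n+d-1}{n} + 2\sum_{i=0}^{n-1}\binom{i+d-1}{i} \;=\; h(n),
\end{equation*}
which gives the desired upper bound on $\dim\mathcal{H}_n$. The main delicate point is verifying the inclusion $r^{2j}\mathcal{P}_{n-j-1} \subseteq W_{j-1}$, which aligns the filtration with what Lemma~\ref{le.f12} actually measures; once that is set, everything else is multinomial bookkeeping.
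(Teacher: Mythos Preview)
Your proof is correct and follows essentially the same route as the paper: multinomial-expand $r_i^{2n}$ to land in $V=\sum_{a=0}^n r^{2a}\mathcal{P}_{n-a}$, then telescope $V$ against the chain $\mathcal{P}_n\subset \mathcal{P}_n+r^2\mathcal{P}_{n-1}\subset\cdots$ and count each increment via Lemma~\ref{le.f12}. Your filtration/quotient phrasing (and the explicit check that $r^{2j}\mathcal{P}_{n-j-1}\subseteq W_{j-1}$) is in fact a bit more careful than the paper's ``disjoint subsets'' language, but the underlying decomposition and the arithmetic are the same.
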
 
\begin{proof}
The proof can be done via induction. For $n=0$ we have $\mathcal{H}_{0}=\{ 1 \}$ and $dim(\mathcal{H}_{0})=1$.
For $n=1$ if $\bdx_0$ is a computational point in $\in\mathbb{R}^d$ then 
\begin{align}\label{eq3.3}
r_0^2  & =\| \bdx - \bdx_0 \|^2 = (\bdx - \bdx_0)(\bdx - \bdx_0)^T= 
\| \bdx_0 \|^2 - 2 \bdx_0 \bdx^T + \| \bdx \|^2 \nonumber \\
           & = c_0 + \sum_{k=1:d} c_k x_k +r^2 ,
\end{align}
where $c_0, c_1, ..., c_d$ are constant numbers depending on $\bdx_0$ and $r^2=\| \bdx \|^2$. Then, $\mathcal{H}_{1}$ has one base of degree zero, $d$ basis of degree one and one base of degree two. 
For $n \geq 2$, by Equation (\ref{eq3.3}) we have
\begin{align*}
r_0^{2n}=(r_0^2)^n=(c_0 + \sum_{k=1:d} c_k x_k +r^2)^n = \sum_{\sum \alpha_k + \beta \leq n}  c(\alpha_1, \alpha_2, ..., \alpha_d, \beta) \prod_{k=1:d} x_k^{\alpha_k} r^{2 \beta}  ,
\end{align*}
where $c(\alpha_1, \alpha_2, ..., \alpha_d, \beta)$ are constant numbers depending on $\alpha_1, \alpha_2, ..., \alpha_d$ and $\beta$. The last equality shows the basis functions of $\mathcal{H}_{n}$ are in the form 
\begin{align*}
\prod_{k=1:d} x_k^{\alpha_k} r^{2 \beta} ~~~s.t.~~ \sum \alpha_k + \beta \leq n .
\end{align*}
To count basis functions of $\mathcal{H}_{n}$, we split $\mathcal{H}_{n}$ into disjoint subsets and count their basis functions separately by using Lemma \ref{le.f12}. These disjoint subsets are $\mathcal{P}_n, \langle\mathcal{P}_{n-1}r^2 - \mathcal{P}_n\rangle, \langle\mathcal{P}_{n-2}r^4 - \mathcal{P}_{n-1} r^2\rangle,$ $ ...,  \langle\mathcal{P}_{0}r^{2n} - \mathcal{P}_{1} r^{2n-1}\rangle$ which they lead
\begin{align} \label{al.1}
\mathcal{H}_{n}
= \sum_{i=0}^n \mathcal{P}_{n-i} r^{ 2 i }
&= \mathcal{P}_n + \sum_{i=1:n} \langle\mathcal{P}_{n-i} r^{2 i} - \mathcal{P}_{n-i-1} r^{2 (i-1)}\rangle \nonumber\\
& = \mathcal{P}_n + \sum_{i=1:n} \langle\mathcal{P}_{n-i} r^2-\mathcal{P}_{n-i-1}\rangle r^{2 (i-1)} ,
\end{align}
and by Lemma \ref{le.f12} we have
\begin{align} \label{al.2}
dim(\mathcal{H}_{n})
&= dim( \mathcal{P}_n) + \sum_{i=1:n} dim( \langle\mathcal{P}_{n-i} - \mathcal{P}_{n-i-1}\rangle ) \nonumber \\
& = \sum_{i=0:n} 
\begin{pmatrix}
i+d-1 \\
i
\end{pmatrix}
+ \sum_{i=1:n} 
\begin{pmatrix}
n-i+d-1 \\
n-i
\end{pmatrix} \nonumber \\
& = 2 \sum_{i=0:n-1} 
\begin{pmatrix}
i+d-1 \\
i
\end{pmatrix}
+ 
\begin{pmatrix}
n+d-1 \\
n
\end{pmatrix} .
\end{align}
\end{proof}

Theorem \ref{le.dim} presents a constrictive algorithm to obtain the basis of $\mathcal{H}_{n}$. Figure \ref{fig1} shows these basis functions for $d=1, 2$ and $3$ where in this figure $r^2=\| \mbf{x}\|^2$. From Theorem \ref{le.dim} dimension of $\mathcal{H}_{n}$ is equal to
\begin{align}\label{Dn}
 h(n) 
= \left\{ \begin{array}{ll}
        2n+1 & \mbox{if $d=1$} ,\\
        (n+1)^2 & \mbox{if $d=2$} ,\\
        (n+1)(n+2)(2n+3)/6 & \mbox{if $d=3$} , 
        \end{array} \right. 
\end{align} 
when it is full rank. Also from Theorem \ref{th1} dimension of linear space $\mathcal{H}$ is less than or equal to $h(n)$. 
Note that by Theorem \ref{le.dim} linear space $\mathcal{H}_n$ has at most $h(n)$ basis functions and it can be presented as
\begin{align}\label{eq.Hh}
\mathcal{H}_n =\langle r_1^{2n}, r_2^{2n}, ... , r_{h(n)}^{2n} \rangle ,
\end{align} 
when $\mbf{x}_1, \mbf{x}_2, ..., \mbf{x}_{h(n)}$ are different points in $\mathbb{R}^d$.
Some important relations between linear spaces $\mathcal{H}_{n}$ and $\mathcal{P}_{n}$ are obtained from Theorem \ref{le.dim} which are noted in the following corollary.

\begin{figure}\label{fig1} 
\centering 
\includegraphics[scale=0.4]{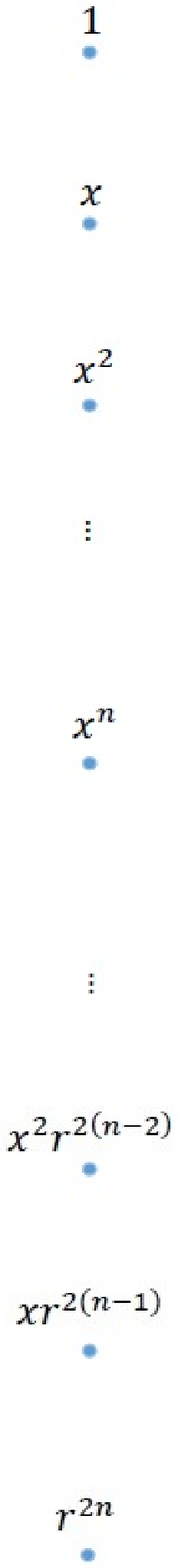} ~~~~~~~
\includegraphics[scale=0.4]{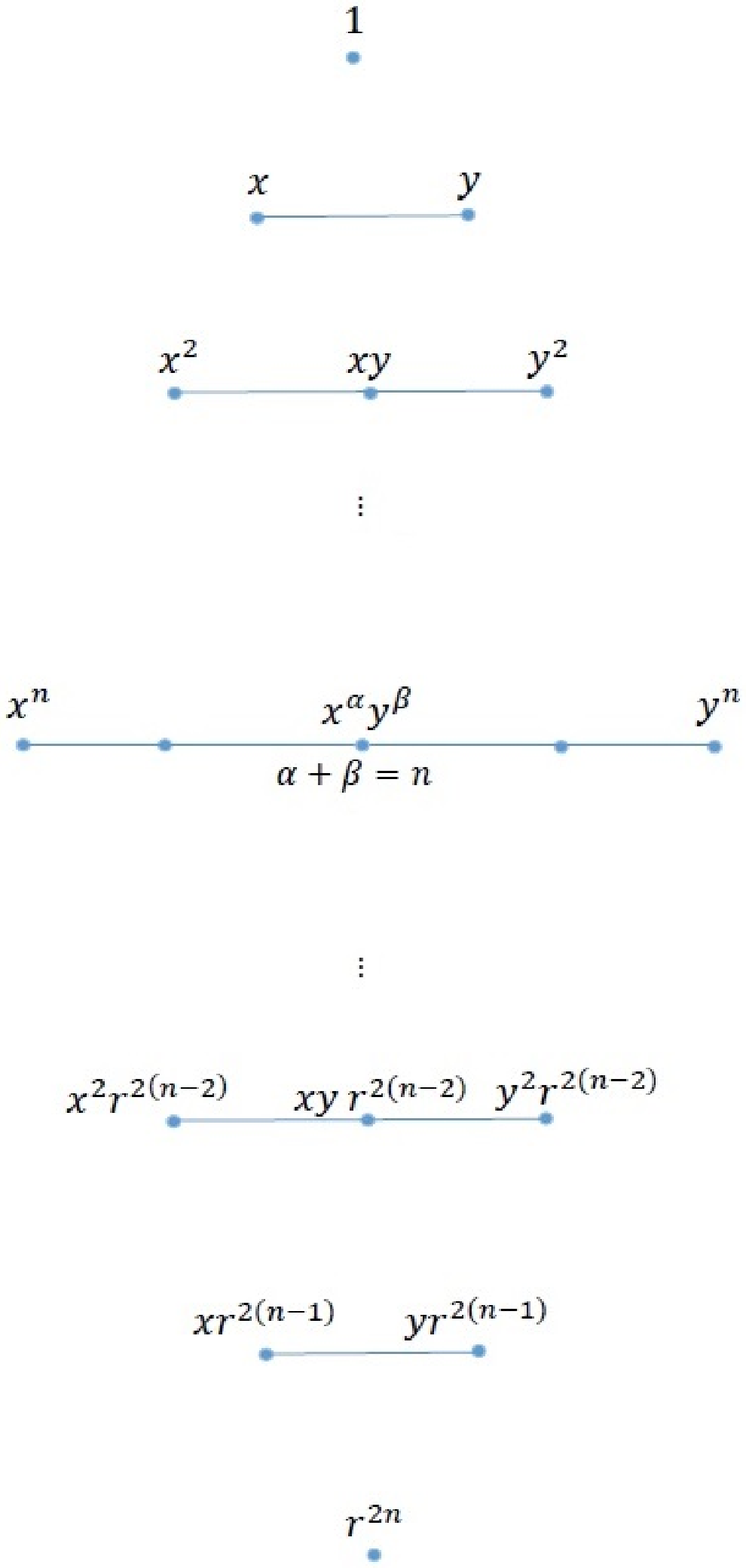} ~~~
\includegraphics[scale=0.4]{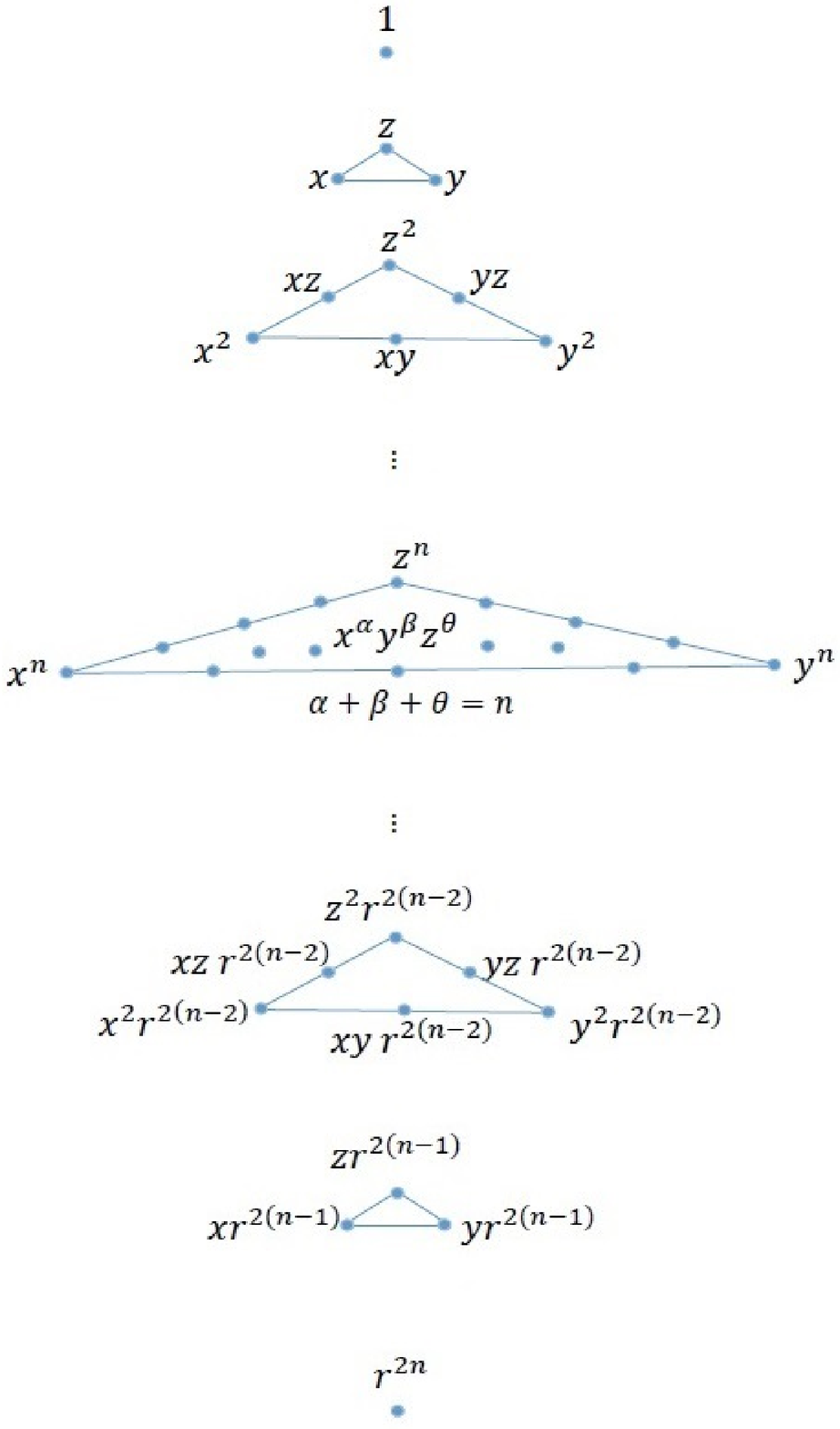} 
\caption{Diagram of basis functions of $\mathcal{H}_{n}$ for $d=1$ (left), $d=2$ (center) and $d=3$ (right). 
} 
\label{fig1} 
\end{figure} 

\begin{cor}\label{co.pol2}
Let $\mathcal{H}_{n}$ is linear space of radial polynomials of degree $2n$ and $\mathcal{P}_{n}$ is linear space of polynomials of degree less than or equal to $n$. If $\mathcal{H}_{n}$ is full rank then the following items are valid
\begin{itemize}
\item
$ \mathcal{P}_{n}  \subset \mathcal{H}_n \subseteq \mathcal{P}_{2n} ~,$
\item
$ dim( \mathcal{H}_n) = \, dim(\mathcal{P}_{n}) + \, dim(\mathcal{P}_{n-1}) ~,$
\item
$ \mathcal{H}_n = \mathcal{P}_{2n} ~~ \hbox{for} ~~ d=1 ~, $
\item
$ \mathcal{P}_{n+1} \not\subseteq \mathcal{H}_n \subset \mathcal{P}_{2n} ~~ \hbox{for} ~~ d \geq2 .$
\end{itemize}
\end{cor}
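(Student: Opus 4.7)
The plan is to prove the four bullets in order, leveraging the decomposition $\mathcal{H}_n = \sum_{i=0}^n \mathcal{P}_{n-i}\, r^{2i}$ derived inside the proof of Theorem \ref{le.dim}, together with the explicit dimension formula $h(n)$.

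The first bullet is almost immediate from the decomposition: the $i=0$ summand gives $\mathcal{P}_n \subseteq \mathcal{H}_n$, strictness follows because the $i=n$ summand contributes $r^{2n} \in \mathcal{H}_n \setminus \mathcal{P}_n$, and $\mathcal{H}_n \subseteq \mathcal{P}_{2n}$ is clear since each generator $r_i^{2n} = \|\bdx-\bdx_i\|^{2n}$ expands as a polynomial of degree $2n$. For the second bullet I would apply the hockey-stick identity $\sum_{i=0}^{m}\binom{i+d-1}{i} = \binom{m+d}{m}$ to the formula in Theorem \ref{le.dim}, reducing it to $h(n) = 2\binom{n+d-1}{n-1} + \binom{n+d-1}{n}$, and then use Pascal's rule to rewrite this as $\binom{n+d}{n} + \binom{n+d-1}{n-1} = \dim(\mathcal{P}_n) + \dim(\mathcal{P}_{n-1})$. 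The third bullet is a dimension count: for $d=1$ Equation (\ref{Dn}) gives $\dim(\mathcal{H}_n) = 2n+1 = \dim(\mathcal{P}_{2n})$, which combined with $\mathcal{H}_n \subseteq \mathcal{P}_{2n}$ forces equality.

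The only genuinely non-trivial item is the fourth bullet, and my plan there rests on the observation that $\mathcal{H}_n$ is a \emph{graded} subspace of $\mathcal{P}_{2n}$. Each summand $\mathcal{P}_{n-i}\, r^{2i}$ is graded because $r^{2i}$ is homogeneous and $\mathcal{P}_{n-i}$ is itself graded, so the sum $\mathcal{H}_n$ is graded as well. Any contribution from $\mathcal{P}_{n-i}\, r^{2i}$ to the degree-$(n+1)$ homogeneous component has the form $p\, r^{2i}$ with $p$ homogeneous of degree $n+1-2i$, and the constraints $2i \leq n+1 \leq n+i$ force $i \geq 1$; hence every such contribution is divisible by $r^2 = x_1^2 + \cdots + x_d^2$. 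For $d \geq 2$ the monomial $x_1^{n+1}$ is not divisible by $r^2$ (for example, $r^2$ vanishes at the point $(1, \sqrt{-1}, 0, \ldots, 0) \in \mathbb{C}^d$ while $x_1^{n+1}$ does not), so $x_1^{n+1}$ belongs to $\mathcal{P}_{n+1}$ but not to $\mathcal{H}_n$, which simultaneously yields $\mathcal{P}_{n+1} \not\subseteq \mathcal{H}_n$ and, since $\mathcal{P}_{n+1} \subseteq \mathcal{P}_{2n}$ for $n \geq 1$, the strict containment $\mathcal{H}_n \subsetneq \mathcal{P}_{2n}$.

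The main obstacle I expect is the graded-subspace step: one must rigorously verify that the natural grading on $\mathcal{P}_{2n}$ restricts to $\mathcal{H}_n$, which reduces to checking that each $\mathcal{P}_{n-i}\, r^{2i}$ is preserved under taking homogeneous components. Apart from that, everything else is bookkeeping with binomial coefficients and the non-divisibility of $x_1^{n+1}$ by $r^2$ when $d\geq 2$, both of which are short.
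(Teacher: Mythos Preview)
The paper does not supply a proof of this corollary at all; it simply states that the four items are ``obtained from Theorem \ref{le.dim}'' and leaves the verification to the reader, with the explicit basis displayed in Figure \ref{fig1} serving as the intended justification. Your proposal is correct and is precisely the natural way to fill in what the paper omits: bullets 1--3 are read off from the decomposition $\mathcal{H}_n=\sum_{i=0}^n \mathcal{P}_{n-i}\,r^{2i}$ and the formula $h(n)$ exactly as you describe, and your hockey-stick/Pascal computation for bullet 2 is the clean algebraic rendering of equation (\ref{al.2}).

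For bullet 4 your argument is slightly more explicit than anything in the paper. The paper's implicit reasoning is ``look at the basis in Figure \ref{fig1}: every basis element of degree $>n$ carries a factor $r^2$, so $x_1^{n+1}$ is missing.'' Your gradedness observation makes this rigorous, and the obstacle you flag is not a real one: each $\mathcal{P}_{n-i}\,r^{2i}$ is visibly a direct sum of homogeneous pieces because $r^{2i}$ is homogeneous and $\mathcal{P}_{n-i}$ is graded, so the sum $\mathcal{H}_n$ inherits the grading immediately. The non-divisibility of $x_1^{n+1}$ by $r^2$ for $d\ge 2$ via the complex zero $(1,\sqrt{-1},0,\ldots,0)$ is a nice touch; an equally short alternative is to note that $r^2$ is irreducible in $\mathbb{R}[x_1,\ldots,x_d]$ for $d\ge 2$ and is coprime to $x_1$. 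One small edge case: your deduction of $\mathcal{H}_n\subsetneq\mathcal{P}_{2n}$ from $x_1^{n+1}\in\mathcal{P}_{2n}\setminus\mathcal{H}_n$ uses $n+1\le 2n$, so tacitly $n\ge 1$; this is harmless since for $n=0$ the corollary is vacuous or trivial.
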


\section{Computationally efficient basis functions for $\mathcal{H}_n$}\label{sec4}
Basis functions $p_i= r_i^{2n}$, presented in Eq. (\ref{eq.Hh}), have a simple form but they are not computationally efficient, i.e. despite their simplicity, they are not numerically stable. Generally, to enhance the stability, the basis functions can be normalized by mapping the distance function $r_i$ to $[0,1]$ as 
\[
r_i=r_i/\max_{\mbf{x}\in\Omega}\{r_i\} . 
\]
By this normalization, the basis functions are mapped to $[0,1]$ and this enhances the stability. Note that the normalized basis functions, $p_i$, vanish close to the center point, $\mbf{x}_i$, and increase rapidly close to some boundary points far from the center point, specially when $n$ is large. For instance, $p_i \leq 10^{-12}$ for $r_i<0.5$ and $p_i=1$ at $r_i=1$ when $n\geq 20$. In this situation, two basis functions $p_i$ and $p_j$ are strongly dependent when there is a point $\mbf{y}\in\Omega$ satisfying
\[
\arg\max_{\mbf{x}\in\Omega} \{ \| \mbf{x}_i - \mbf{x} \| \}=\mbf{y}=\arg\max_{\mbf{x}\in\Omega} \{ \| \mbf{x}_j - \mbf{x} \| \}.
\] 
To check this fact, contour lines of elements of matrix $\mbf{A}=\boldsymbol{\Phi}^T\boldsymbol{\Phi}$ is shown in Figure \ref{fig25} (left) when $\mbf{x}_i=(i-1)/2n$ for $i=1, 2, ..., 2n+1$ and $n=15$. Since
\[
\mbf{A}[i,j]=\sum_{k=1:N} \phi(r_i(\mbf{x}_k))\phi(r_j(\mbf{x}_k)) \simeq N \int_{\Omega} \phi(r_i(\mbf{x}))\phi(r_j(\mbf{x})) \, d\mbf{x} ,
\]
matrix $\mbf{A}$ contains internal product between the basis functions. Therefore $\mbf{A}[i,j]$ shows dependency between $p_i$ and $p_j$. 
Thanks to Figure \ref{fig25} (left), one can see $\mbf{A}[i,j] \simeq 1$ for $1\leq i,j \leq n$ and $n+2\leq i,j \leq 2n+1$ where it shows high dependency between the basis functions. 
This dependency leads unstable numerical results and high condition number, defined as 
\bes
cond( \boldsymbol{\Phi} ) =\| \boldsymbol{\Phi} \| \| \boldsymbol{\Phi}^{-1} \| .
\ees
In the following subsections, two approaches are proposed to reduce the dependency and enrich the stability of numerical results by introducing some new basis functions for $\mathcal{H}_n$. 

\begin{figure}[ht]
\centering 
\includegraphics[scale=0.5]{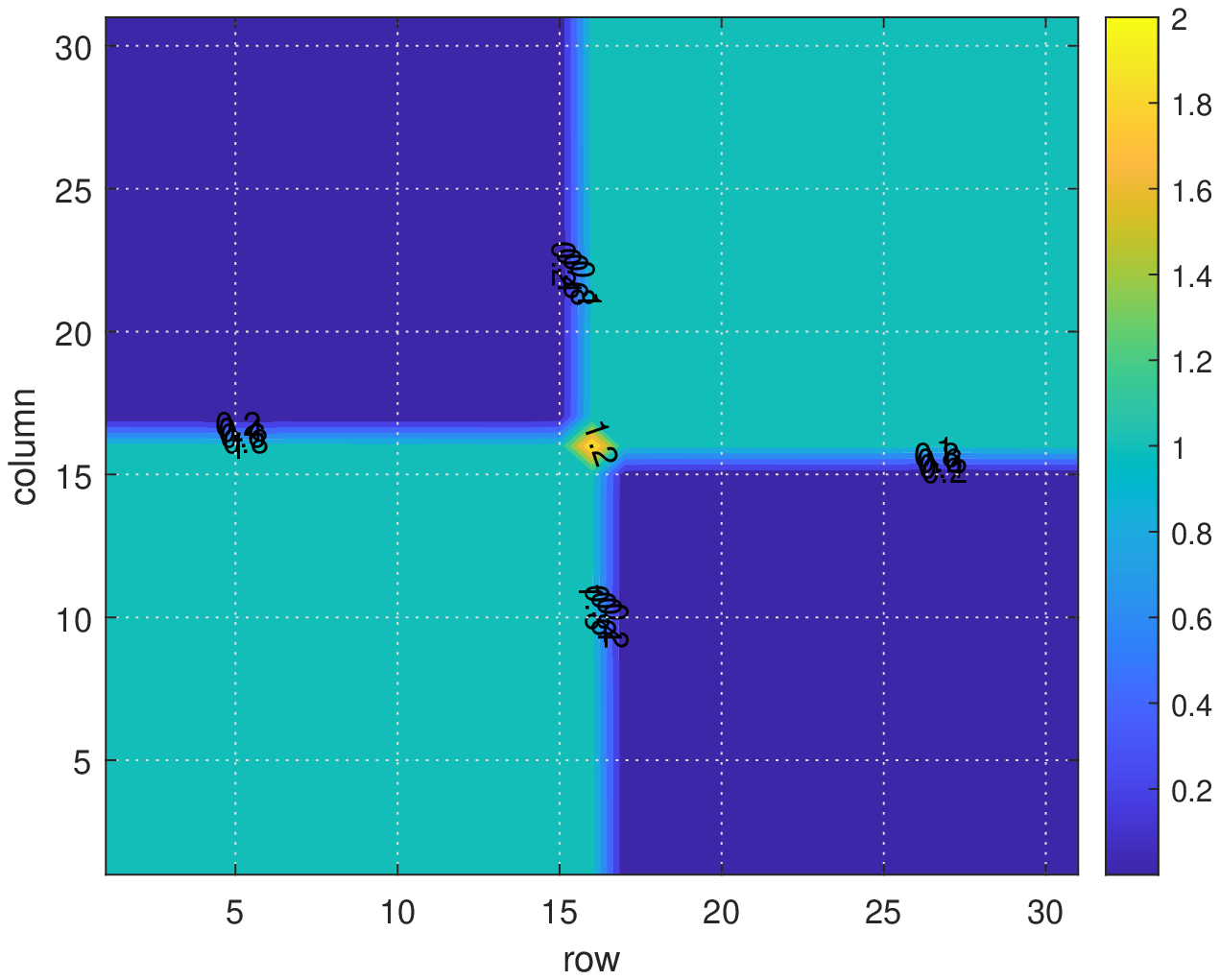}
\includegraphics[scale=0.5]{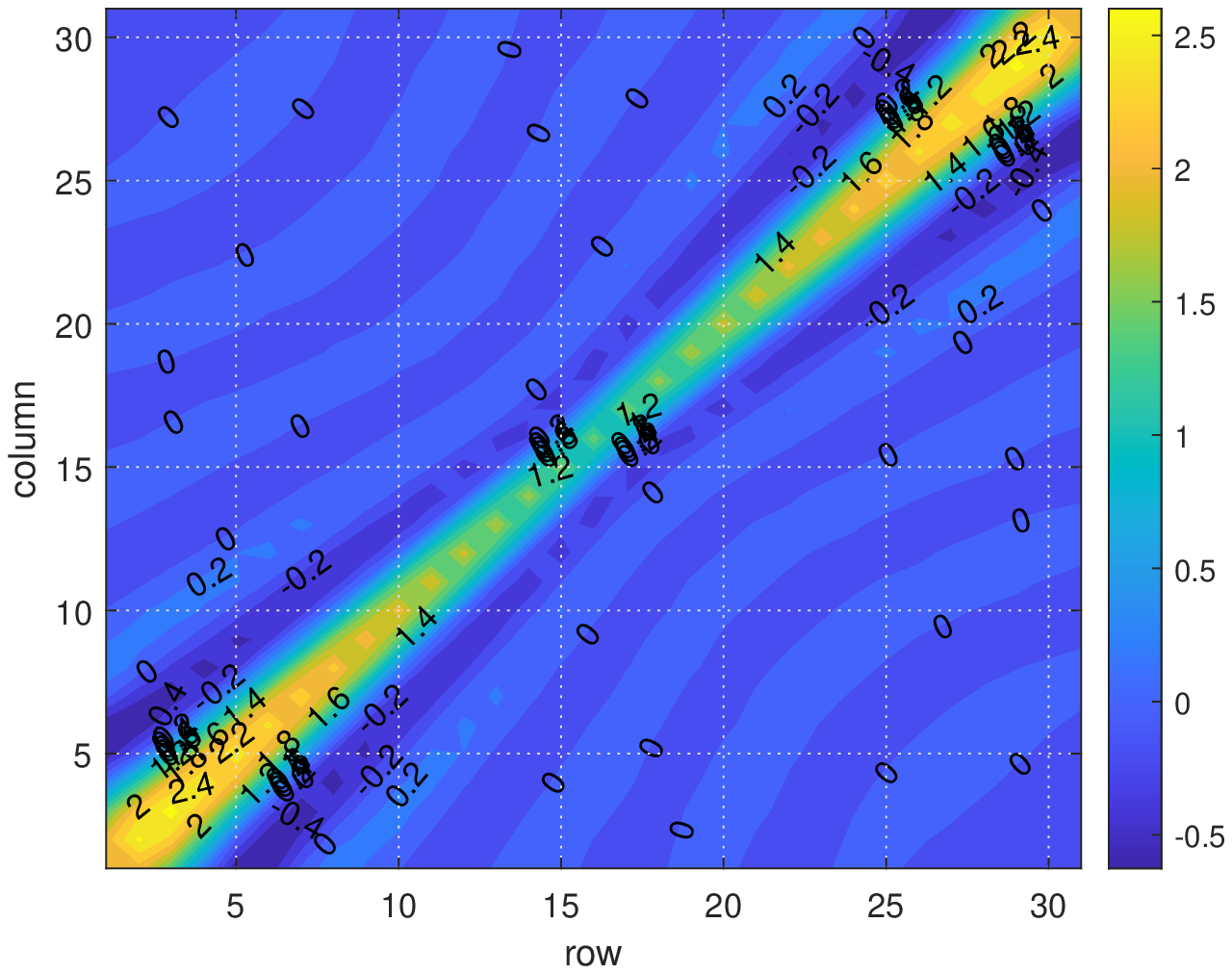} 
\caption{Contour lines of elements of matrix $\mbf{A}$ for polynomial basis functions $p_i$ (left) and $p_{i,2}$ (right). 
} 
\label{fig25} 
\end{figure} 

\subsection{Semi-cardinal basis functions}\label{sec4_1}
In order to find more stable basis functions, we focus on basis functions which are similar to the cardinal function, taking $1$ at a center point and $0$ at the others. This property enhances the stability by resulting a semi-diagonal coefficient matrix $\boldsymbol{\Phi}$. For this goal, we introduce basis functions
\[
p_{i,\boldsymbol{\alpha}}=\prod_{k=1:n} (r_i^2 - \alpha_k^2) ,
\]
for $i=1, 2, ..., h(n)$ and some  positive vector $\boldsymbol{\alpha}=[\alpha_1^2, \alpha_2^2, ..., \alpha_n^2]$. Basis function, $p_{i,\boldsymbol{\alpha}}$ is a polynomial function with roots $\alpha_1, \alpha_2, ..., \alpha_n$. Note that, $p_i=p_{i,\mbf{0}}$ where $\mbf{0}$ is the zero vector. A cross-section of the basis function is plotted in Figure \ref{fig2} when the center point is $0$ and $\alpha_k^2=t_k^\tau$ for $\tau=0,1,2,2.1$ where $t_k$ is $k $th positive root of Chebyshev polynomial of first kind of degree $2n+1$, i.e. 
\[
t_k=\cos\big(\frac{2k-1}{4n+2} \pi \big) .
\]
It is easy to find from Figure \ref{fig2} that $\tau>2$ does not yield flat function for the points far from the center, then we restrict our research on $\tau\leq 2$. 
As a result, three kinds of functions 
\begin{align}
p_{i,\tau} = \prod_{k=1:n} (r_i^2 - t_k^\tau) ~;~~~ \tau = 0, 1, 2 ,
\end{align} 
are considered here as semi-cardinal basis of $\mathcal{H}_n$. To test the stability of the new basis functions, condition number of the coefficient matrix, $\boldsymbol{\Phi}$, is calculated and is presented in Figure \ref{fig3}. For this figure center points are $\mbf{x}_i=(i-1)/2n$ for $i=1, 2, ..., 2n+1$ where $n=15$.  This figure shows that the condition number for $p_{i,2}$ is smaller than that of the others, specially Gaussian RBF with shape parameter $\epsilon = 1$. So, to enhance the numerical stability one can use $p_{i,2}$ as basis of $\mathcal{H}_n$ for numerical interpolation. To check dependency of basis functions $p_{i,2}$ and  $p_{j,2}$ for $i,j=1, 2, ..., N$, contour lines of elements of matrix $\mbf{A}=\boldsymbol{\Phi}^T\boldsymbol{\Phi}$ is shown in Figure \ref{fig25} (right). From this figure, large elements of the matrix are located close to its diagonal and small elements are far. Then, the basis functions $p_{i,2}$ and $p_{j,2}$ are semi-independent when the distance between their center points is sufficiently large. This property of the semi-cardinal basis functions enhances the stability of numerical results, specially for large $n$.
 \begin{figure}[ht]
\centering 
\includegraphics[scale=0.5]{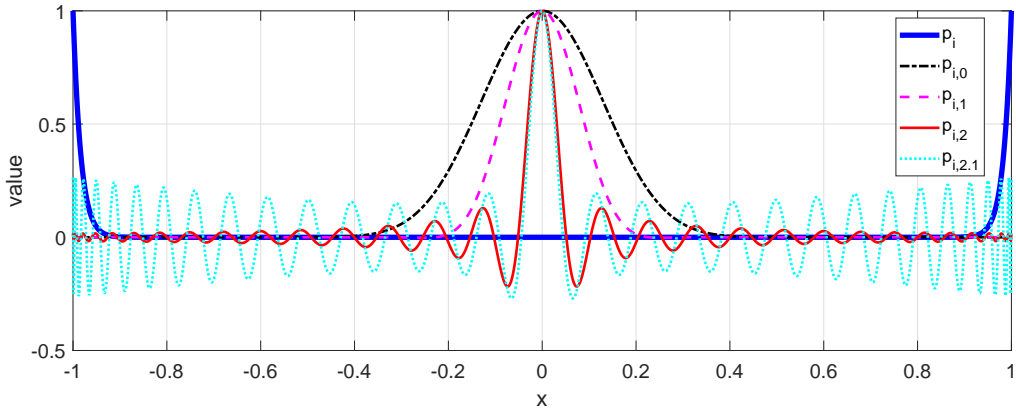} 
\caption{A cross section of polynomials $p_{i,\tau}=\prod_{k=1:n} (r_i^2 - t_k^\tau)$ for $\tau=0,1,2,2.1$ where the center point is $0$ and $n=30$. From this figure $p_{i,\tau}$ takes $1$ at the center point and vanishes at points far from center for $\tau\leq 2$.
}
\label{fig2} 
\end{figure}

\begin{figure}[ht]
\centering 
\includegraphics[scale=0.7]{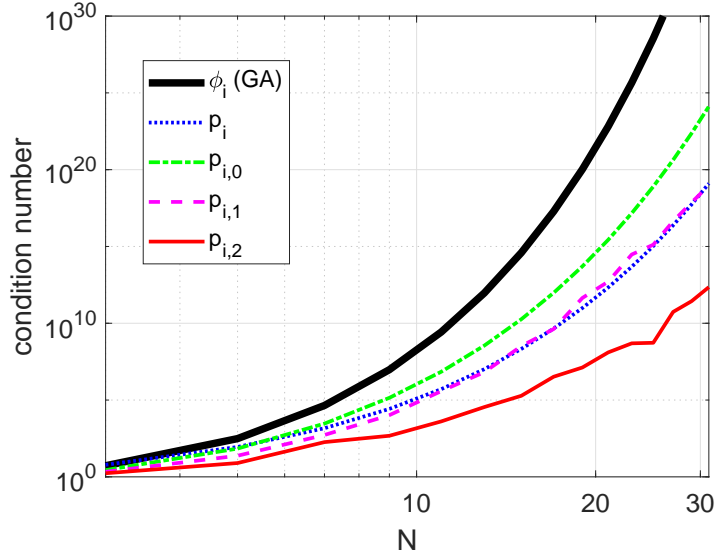} 
\caption{ Condition number of coefficient matrix $\boldsymbol{\Phi}$ for Gaussian RBF with $\epsilon=1$ and polynomial basis functions $p_i, p_{i,0}, p_{i,1}$ and $p_{i,2}$. From this figure, the condition number of $p_{i,2}$ is smaller than that of Gaussian RBF and $p_i$  for large values of $n$.
} 
\label{fig3} 
\end{figure}

\subsection{Regularized basis functions}\label{sec4_2}
Although the new basis functions $p_{i,2}$ enhance the stability, but they all are polynomials of degree $2n$. This fact reduces accuracy of Approximation (\ref{eq2.2}) for large values of $n$ because polynomials with lower degrees are not present in the set of basis functions. To overcome this drawback, basis functions with lower degrees should be replaced with some basis functions of degree $2n$. As a result, a set of regularized basis functions are obtained containing polynomials of degree $0, 2, 4, ..., 2n$ as 
\be\label{new}
q_{1,2} =1 ~~~,~~~ q_{i, 2} = \prod_{k=1:j} (r_i^2 - t_k^2) ,
\ee
for $i=h(j-1)+1, h(j-1)+2, ..., h(j)$ where $j=1, 2, ...,n$ and $t_k=\cos( (2k-1)\pi /(4j+2) )$. The proposed regularized basis functions can be presented in expanded form as below:\\
\begin{align*}
q_{1,2} &=1 ~, \\
q_{i,2} &=  (r_i^2 - t_1^2) ~,~~~~~~~~ i=2, 3, \cdots, h(1),~ t_1=\cos( \frac{1}{6} \pi ), \\
q_{i,2} &=  \prod_{k=1:2} (r_i^2 - t_k^2)  ~,~~ i=h(1)+1, h(1)+2, \cdots, h(2),~ t_k=\cos( \frac{2k-1}{10} \pi ), \\
& \vdots \\
q_{i,2} &=   \prod_{k=1:n} (r_i^2 - t_k^2)  ~,~~ i=h(n-1)+1, h(n-1)+2, \cdots,  h(n), ~ t_k=\cos( \frac{2k-1}{4n+2} \pi ) ,
\end{align*}
where the function in the first row is the base of $\mathcal{H}_0$, functions in the second row are the basis of $\mathcal{H}_1$, functions in the third row are the basis of $\mathcal{H}_2$ and so on. Note that the same strategy can be imposed for the other basis functions to find regularized basis functions. For instance, regularized monomial basis functions
\be\label{new2}
q_{1} =1 ~~~,~~~ q_{i} = r_i^{2j} ,
\ee
can be obtained from $p_i$ for $i=h(j-1)+1, h(j-1)+2, ..., h(j)$ and $j=1, 2, ...,n$.
Numerical experiments presented in Subsection \ref{sec52} show the ability of the new regularized polynomial basis functions for approximation.
Although the main objective of this paper is interpolation, but the new basis functions can also be applied for solving PDEs similar to RBFs as it is described in Subsection \ref{sec53}.

\section{Numerical experiments}\label{sec5}
The approximation power of the new linear space, $\mathcal{H}_n$, is analyzed numerically in this section. At first, the distance from smooth RBFs to linear spaces $\mathcal{H}_n$, $\mathcal{P}_{2n-1}$	 and $\mathcal{P}_{2n}$ is calculated in Subsection \ref{sec51} to find the closest linear space of polynomials to the RBFs. Then, subsections \ref{sec52} and \ref{sec53} are devoted to interpolate data and solve PDEs, respectively. In both subsections, polynomial basis functions of $\mathcal{H}_n$ are compared with Gaussian RBFs.  In comparison with Gaussian RBFs, the numerical results highlight the approximation power of the proposed polynomial basis functions.


\subsection{The distance from smooth RBFs to $\mathcal{H}_n$}\label{sec51}
The aim of this subsection is measuring the distance between smooth RBFs and $\mathcal{H}_n$. The distance from RBF $\phi_0(\mbf{x})=\phi(\mbf{x}-\mbf{x}_0)$ to $\mathcal{H}_n$ for a fixed point $\mbf{x}_0\in\Omega$ is defined as
\begin{align} \label{eq.dis1}
dis(\phi_0, \mathcal{H}_n) 
= \min_{ p\in \mathcal{H}_n}  \| \phi_0 - p \|_{2} 
=   \| \phi_0 - p_0 \|_{2} ,
\end{align}
where $p_0\in \mathcal{H}_n$ satisfies $\int_{\Omega} (\phi_0 (\mbf{x}) - p_0(\mbf{x})) p(\mbf{x}) \, d\mbf{x}=0$ for all $p\in \mathcal{H}_n$. If $p_0$ is expanded by polynomial basis functions $p_1, p_2, ..., p_{h(n)}$ of $ \mathcal{H}_n$ as $p_0=\sum_{k=1:h(n)} \alpha_k p_k$, then the unknown coefficients $\alpha_1, \alpha_2, ..., \alpha_{h(n)}$ are determined by solving system of linear equations $\mbf{B} \bsy{\alpha} = \mbf{b}$, where
\be\label{base1}
\mbf{B}[i,j]=\int_{\Omega} p_i(\bdx) p_j(\bdx) \, d \bdx ~,~~~~ 
\bsy{\alpha}[i]=\alpha_i ~,~~~~ 
\mbf{b}[i]=\int_{\Omega} p_i(\bdx) \phi_0(\bdx) \, d \bdx ~,~
\ee
for  $i, j=1, 2, ..., h(n)$. The distance between $\phi_0$ and $\mathcal{H}_n$ is calculated for some RBFs and are reported in Table \ref{tab:table2} where $\epsilon=1/2$, $\mbf{x}_0=(0,0)$, $\Omega=[-1,1]^2$ and $n \leq 7$. The studied RBFs in this subsection are 
\begin{itemize}
\item Gaussian RBF, $\phi(r)=\exp(-\epsilon^2 r^2)$, 
\item Inverse multiquadric RBF, $\phi(r)=(1+\epsilon^2 r^2)^{-1/2}$, 
\item Multiquadric RBF, $\phi(r)=(1+\epsilon^2 r^2)^{1/2}$,
\item Inverse quadric RBF, $\phi(r)=(1+\epsilon^2 r^2)^{-1}$.
\end{itemize}

 \begin{table}[ht]
  \begin{center}
    \caption{The distance from smooth RBF $\phi(\mbf{x}-\mbf{x}_0)$ to linear spaces $\mathcal{H}_{n}$, $\mathcal{P}_{2n-1}$ and $\mathcal{P}_{2n}$ where $\mbf{x}_0=(0,0)$, $\epsilon=1/2$ and $\mbf{x}\in[-1,1]^2$.}
    \label{tab:table2}
    \begin{tabular}{c|c|ccccccccccc}
      \hline
      \hline
      RBF & Linear space &        &    &   & $n$    &     &    &    &   \\
            &                   &     & 2 & 3 & 4 & 5  & 6 & 7 & \\
      \hline
               & $\mathcal{H}_{n}$        & & $4.28e{-4}$ & $1.32e{-5}$ & $3.26e{-7}$ &  $6.71e{-9}$ & $1.19e{-10}$ & $2.01e{-12}$ & \\
        GA  & $\mathcal{P}_{2n-1}$      &  & $1.06e{-2}$ & $4.20e{-4}$ & $1.25e{-5}$ &  $3.00e{-7}$ & $6.02e{-9}$ & $1.04e{-10}$ & \\
               & $\mathcal{P}_{2n}$        &  & $4.20e{-4}$ & $1.25e{-5}$ & $3.00e{-7}$ &  $6.01e{-9}$ & $1.04e{-10}$ & $2.00e{-12}$ & \\
      \hline             
              & $\mathcal{H}_{n}$       &  & $5.18e{-4}$ & $4.57e{-5}$ & $4.12e{-6}$ &  $3.79e{-7}$ & $3.54e{-8}$ & $3.33e{-9}$ & \\
       IMQ  & $\mathcal{P}_{2n-1}$      &  & $6.21e{-3}$ & $5.06e{-4}$ & $4.31e{-5}$ &  $3.78e{-6}$ & $3.38e{-7}$ & $3.08e{-8}$ & \\
               & $\mathcal{P}_{2n}$       &  & $5.06e{-4}$ & $4.31e{-5}$ & $3.78e{-6}$ &  $3.38e{-7}$ & $3.08e{-8}$ & $2.84e{-9}$ & \\
      \hline             
               & $\mathcal{H}_{n}$     &  & $1.24e{-4}$ & $7.86e{-6}$ & $5.53e{-7}$ &  $4.16e{-8}$ & $3.30e{-9}$ & $2.70e{-10}$ & \\
      MQ    & $\mathcal{P}_{2n-1}$      &  & $2.46e{-3}$ & $1.21e{-4}$ & $7.41e{-6}$ &  $5.07e{-7}$ & $3.72e{-8}$ & $2.87e{-9}$ & \\
              & $\mathcal{P}_{2n}$     &  & $1.21e{-4}$ & $7.42e{-6}$ & $5.07e{-7}$ &  $3.72e{-8}$ & $2.87e{-9}$ & $2.30e{-10}$ & \\
      \hline             
               & $\mathcal{H}_{n}$      &  & $1.52e{-3}$ & $1.53e{-4}$ & $1.53e{-5}$ &  $1.53e{-6}$ & $1.54e{-7}$ & $1.55e{-8}$ & \\
       IQ    & $\mathcal{P}_{2n-1}$      &  & $1.52e{-2}$ & $1.48e{-3}$ & $1.43e{-4}$ &  $1.40e{-5}$ & $1.37e{-6}$ & $1.34e{-7}$ & \\
               & $\mathcal{P}_{2n}$      &  & $1.48e{-3}$ & $1.43e{-4}$ & $1.40e{-5}$ &  $1.37e{-6}$ & $1.34e{-7}$ & $1.31e{-8}$ & \\
      \hline             
      \hline
    \end{tabular}
  \end{center}
\end{table}
From this table the distance vanishes exponentially when $n$ increases and it is smaller than $10^{-8}$ for $n > 7$. 
Also, results of Table  \ref{tab:table2} reveal that the Gaussian RBF is more closer to the new linear space than the other RBFs. Moreover, the distance from the RBFs to linear spaces  $\mathcal{P}_{2n-1}$ and $\mathcal{P}_{2n}$ also is calculated and is presented in Table \ref{tab:table2}. From numerical results of  Table  \ref{tab:table2}, we have 
\[dis(\phi_0, \mathcal{P}_{2n}) \simeq dis(\phi_0, \mathcal{H}_n) < dis(\phi_0, \mathcal{P}_{2n-1}),
\]
 while $\mathcal{H}_n \subset \mathcal{P}_{2n}$ and dimension of $\mathcal{H}_n$ is significantly smaller than the dimensions of $\mathcal{P}_{2n-1}$ and $\mathcal{P}_{2n}$ for large values of $n$. This fact reveals that $\mathcal{H}_n$ is the smallest subspace of $\mathcal{P}_{2n}$ which the distance from the smooth RBFs to it tends to zero as fast as possible when $n$ increases.

\subsection{The use of $\mathcal{H}_n$ for interpolation}\label{sec52}
In this subsection, the approximation power of $\mathcal{H}_n$ is compared with that of RBFs. Four kinds of functions,  $p_i$, $p_{i,2}$, $q_i$ and $q_{i,2}$, are considered as basis of $\mathcal{H}_n$ and Gaussian function $\exp(-\epsilon r_i^2)$ is assumed as radial basis function when $\epsilon$ varies from $10^{-2}$ to $10^2$. Smooth function 
\be\label{sin}
u(\mbf{x})= \sin(x+y) ,
\ee
is interpolated by the polynomial and radial basis functions defined in $\Omega=[0,1]^2$. 
Center nodal points are considered in both regular and irregular forms in $\Omega$ as is shown in Figure \ref{fig5}. As it is reported in Equation (\ref{Dn}) dimension of $\mathcal{H}_n$ is $h(n)=(n+1)^2$ for two dimensional problems and we need at least $N=h(n)$ center points to construct the linear space.  
Root mean square error (RMSE) of interpolation is calculated as
\be\label{err}
error= \frac{1}{\sqrt{ N }} \sqrt{ \sum_{i=1:N} ( u(\mbf{y}_i)-\bar{u}(\mbf{y}_i) )^2} ,
\ee
where $u$ and $\bar{u}$ are analytical and numerical solutions, respectively, and $\mbf{y}_i\in[0,1]^2$ is selected in $[0,1]^2$, randomly for $i=1, 2, ..., N$. The error of interpolation is reported in Figure \ref{fig5} for $N=49, 121$ and $441$ number of nodal points. From this figure, accuracy of the polynomial basis functions is less than the best ones of the RBFs for small values of $N$, but the accuracy of the regularized basis functions, $q_i$ and $q_{i,2}$, increases significantly when $N$ gets large. The stability of basis functions $q_{i,2}$ leads to more accurate results for larger values of $N$ as it is shown in third column of  Figure \ref{fig5}.    
This shows polynomial basis functions $q_{i,2}$ are appropriate alternative for the RBFs to approximate scattered data. Note that, since polynomials are ill-condition at the regular points, the error of Halton points, reported in the second row of Figure \ref{fig5}, is significantly smaller than that of regular points reported in the first row of Figure \ref{fig5} for large values of $N$. 
Also for small values of shape parameter ($\epsilon^2 \leq 0.1$) the distance from Gaussian RBF to $\mathcal{H}_{11}$ is less than the machine epsilon, $\delta_{mach.} \simeq 2.2 \times 10^{-16}$, and one can suppose $S \simeq \mathcal{H}_{11}$, numerically. So the error of interpolation with RBFs should be less than or equal to that of $q_{i,2}$ when $\epsilon$ is small. But the ill-conditioning of the RBFs does not let it. 

\begin{figure}[ht]
\centering 
\includegraphics[width=0.45\textwidth]{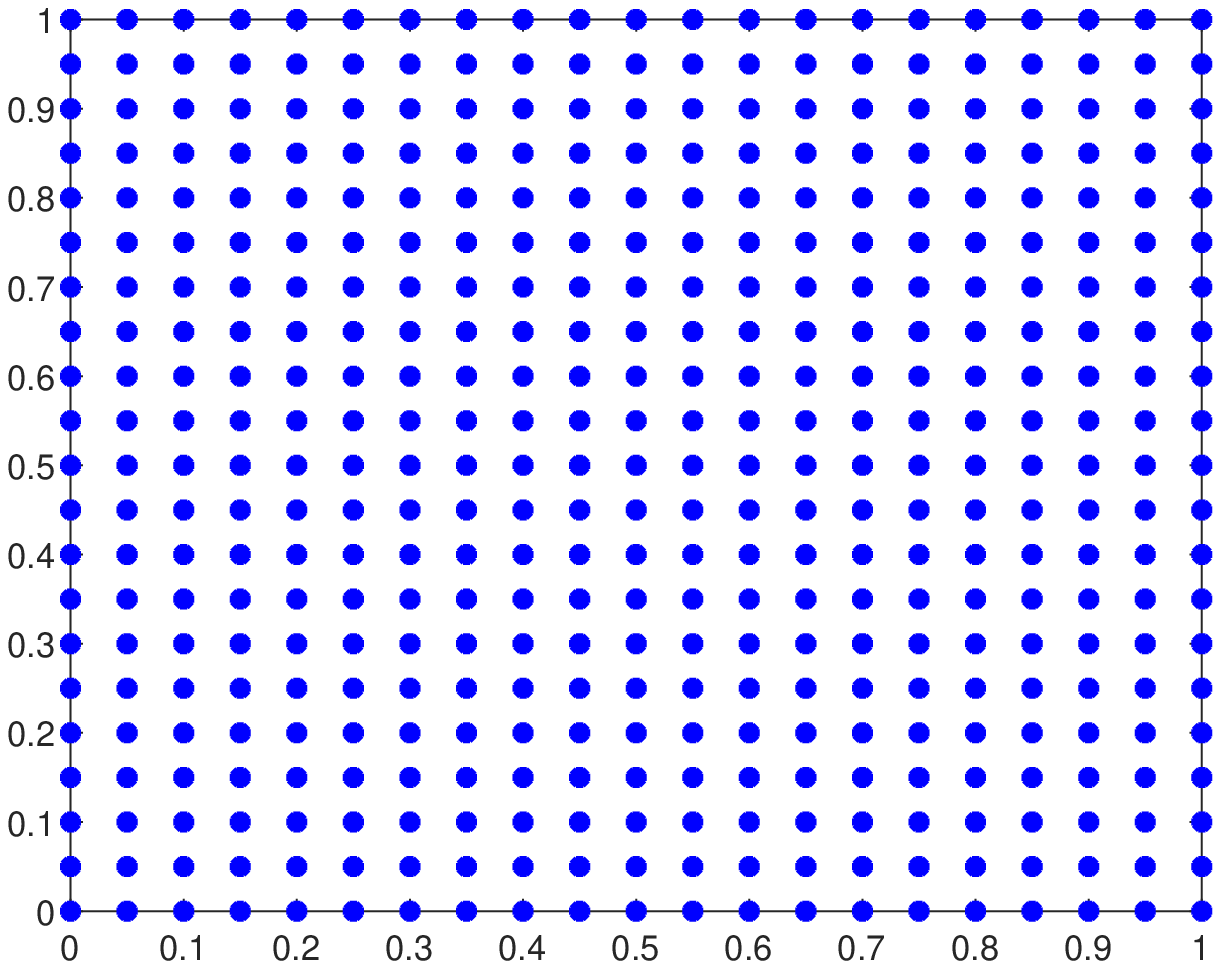}
\includegraphics[width=0.45\textwidth]{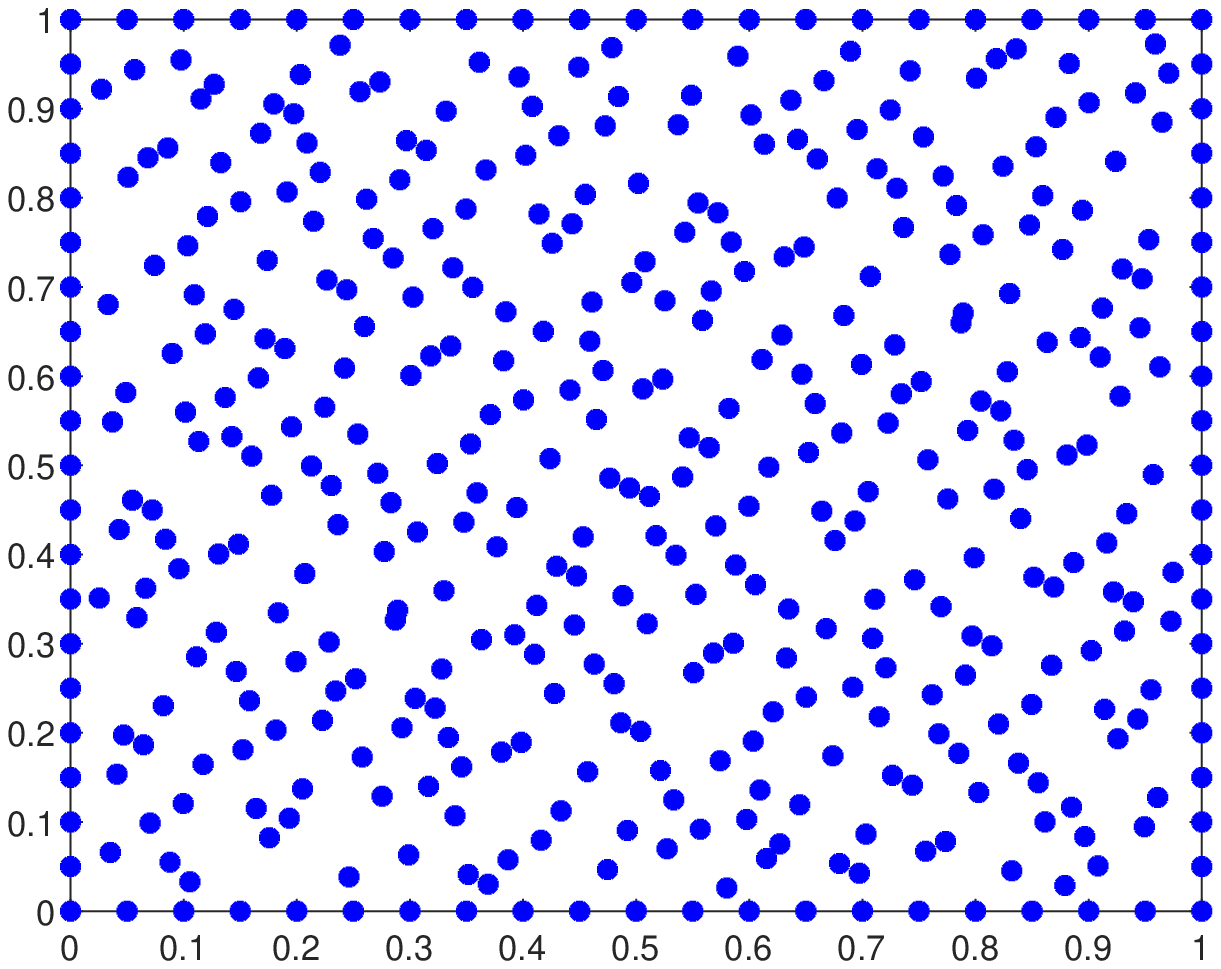} 
\caption{Left: regularly spaced nodal points in square computational domain. 
Right: Halton points considered as irregular distributed nodal points. }
\label{fig6} 
\end{figure}

\begin{figure}[ht]
\centering 
\includegraphics[width=0.3\textwidth]{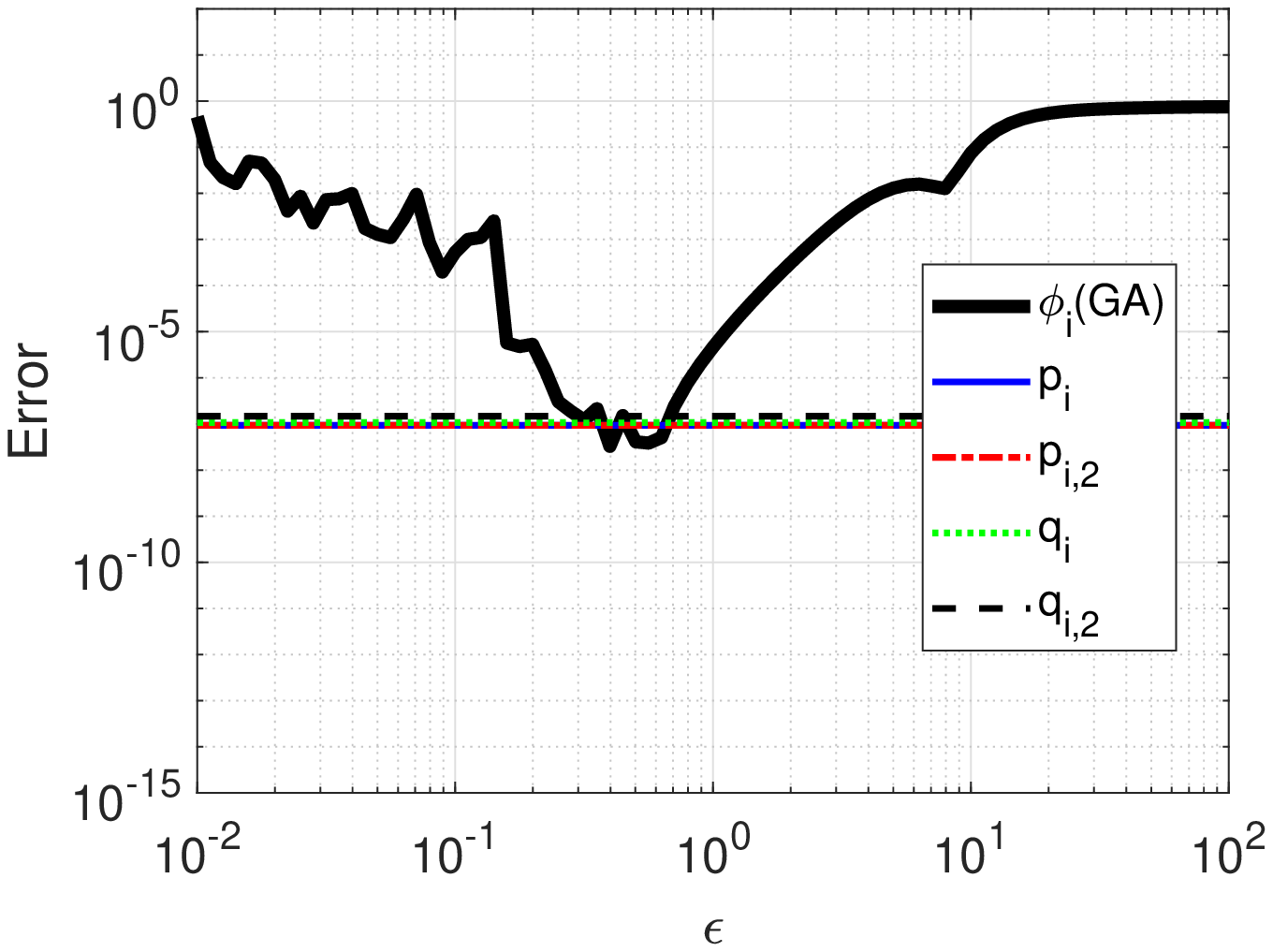}
\includegraphics[width=0.3\textwidth]{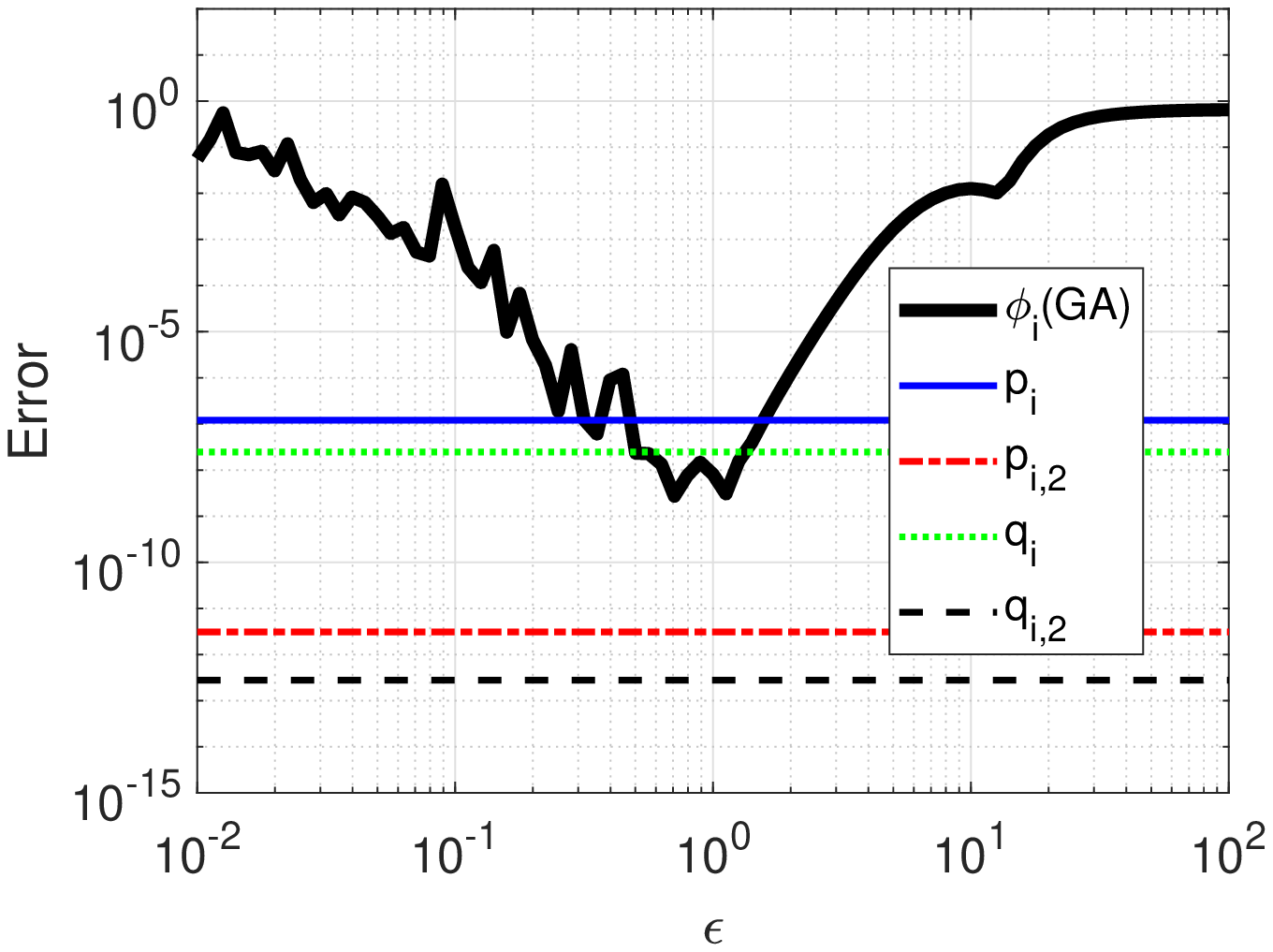}
\includegraphics[width=0.3\textwidth]{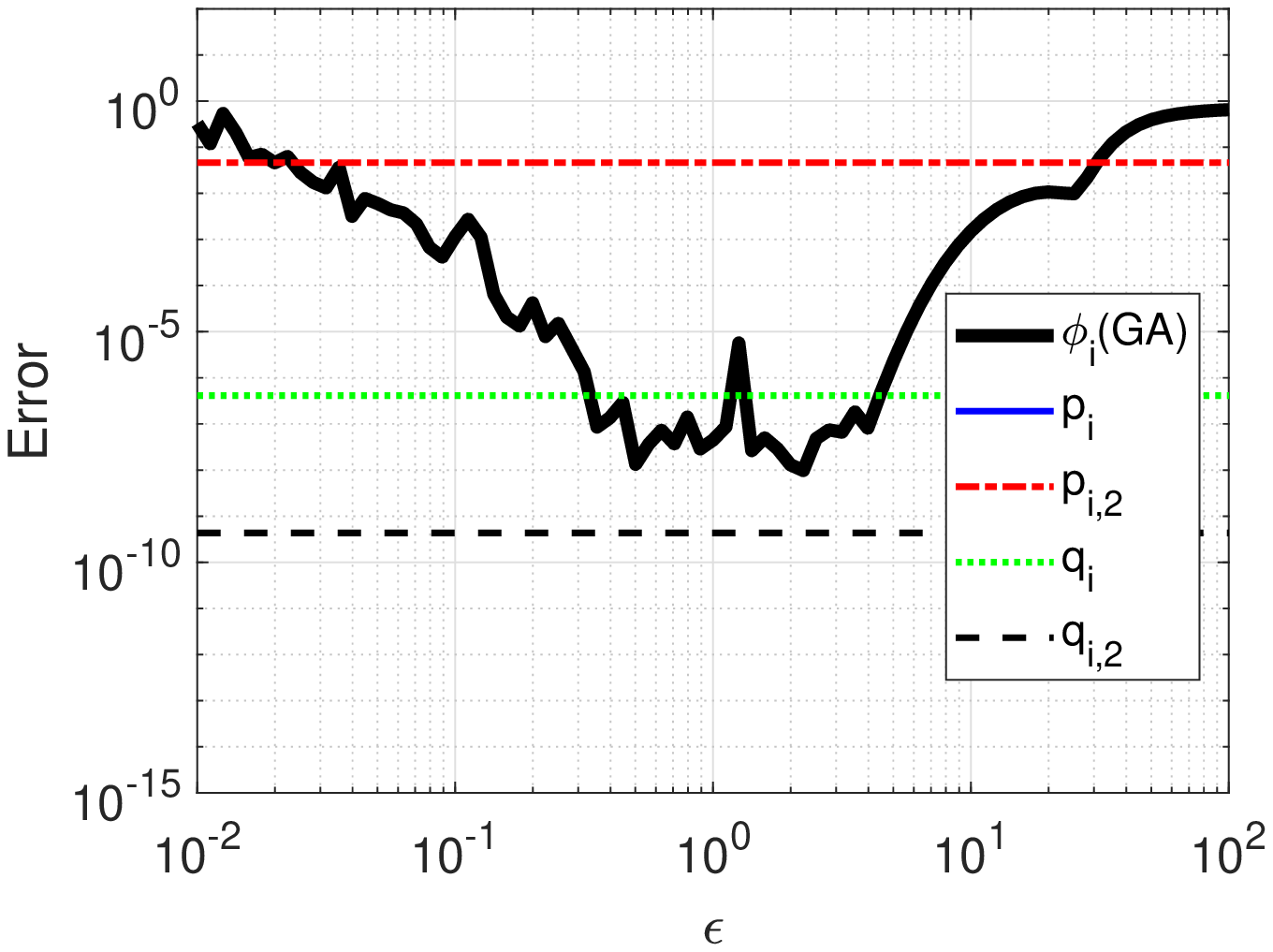}
\includegraphics[width=0.3\textwidth]{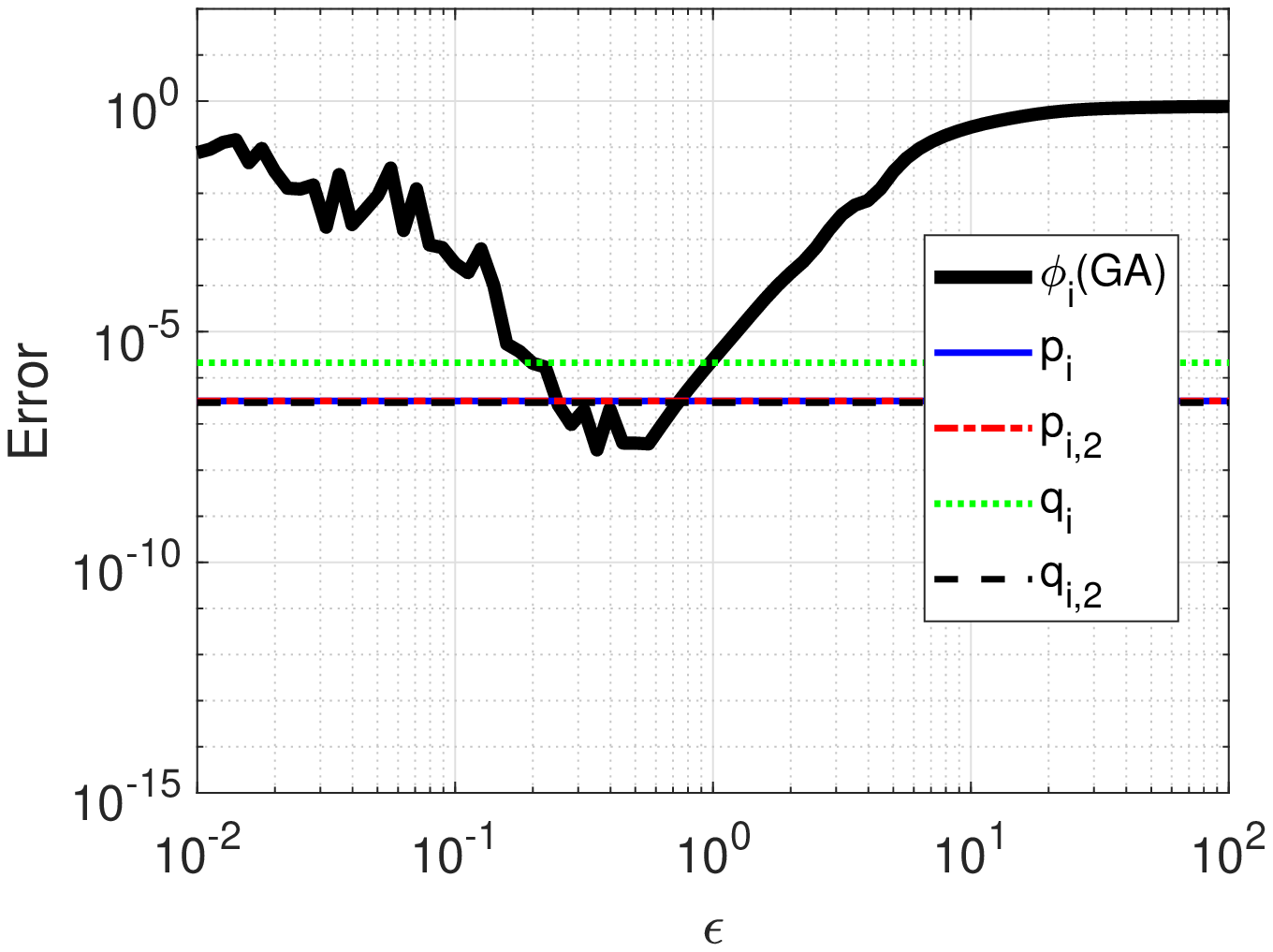}  
\includegraphics[width=0.3\textwidth]{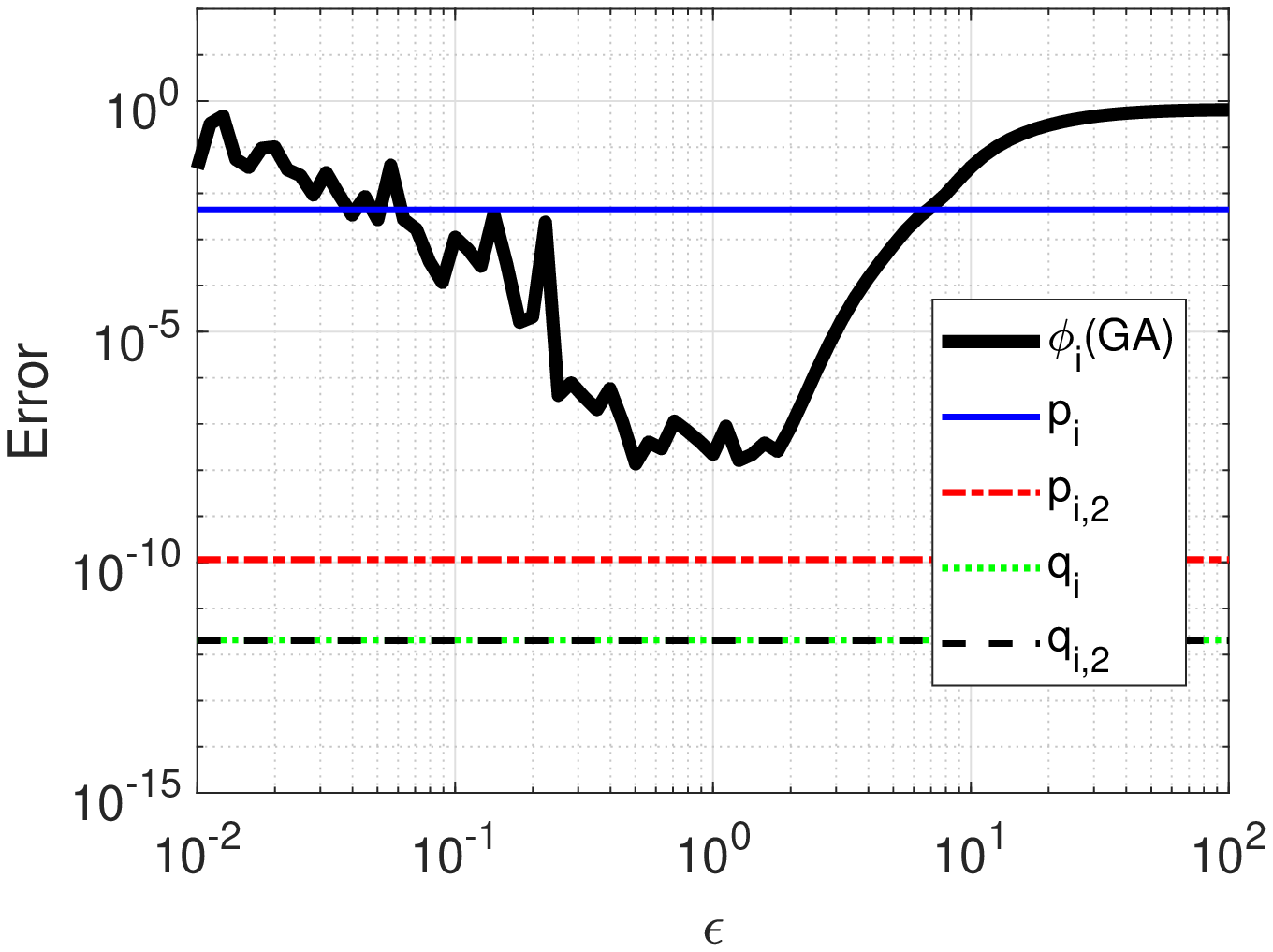}
\includegraphics[width=0.3\textwidth]{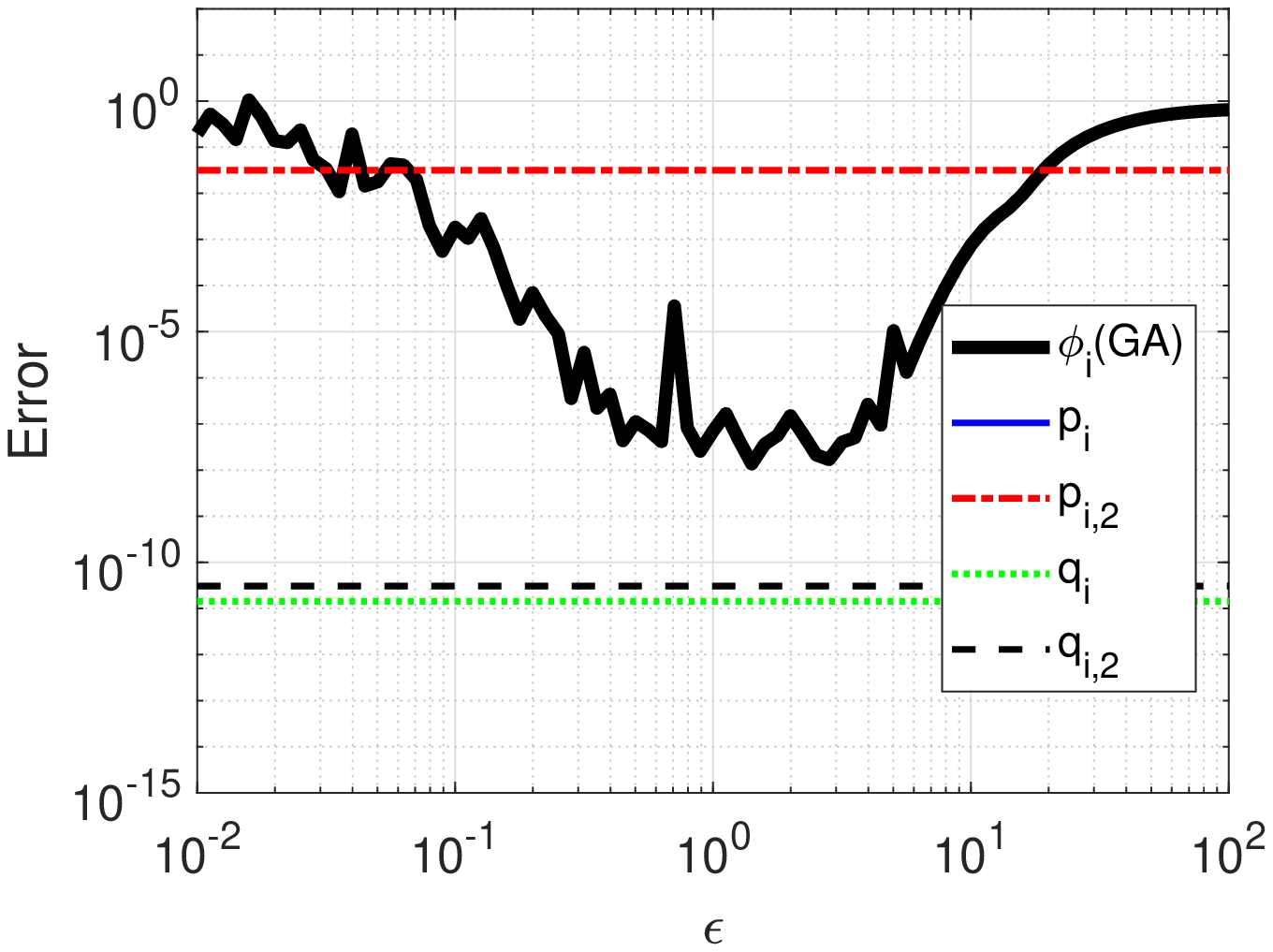} 
\caption{The RMSE for polynomial basis functions $p_i$, $p_{i,2}$, $q_i$, $q_{i,2}$ and Gaussian RBF  versus shape parameter $\epsilon$. The interpolation points are regular and irregular for the top and bottom rows, respectively. Also the first, second and third columns are respect to $N=9, 121$ and $441$, respectively.} 
\label{fig5} 
\end{figure} 

The same investigation is done for the star-shaped domain illustrated in Figure \ref{fig66} (left). Function (\ref{sin}) is interpolated by the polynomial and radial basis functions at $N=121$ scattered points and RMSE (\ref{err}) is calculated for some internal points selected in the domain, randomly. The error is shown in Figure \ref{fig66} (right). The figure reveals that accuracy of regularized polynomial basis functions $q_{i}$ and $q_{i,2}$ is significantly higher than that of the other polynomial and radial basis functions. 

\begin{figure}[ht]
\centering 
\includegraphics[width=0.45\textwidth]{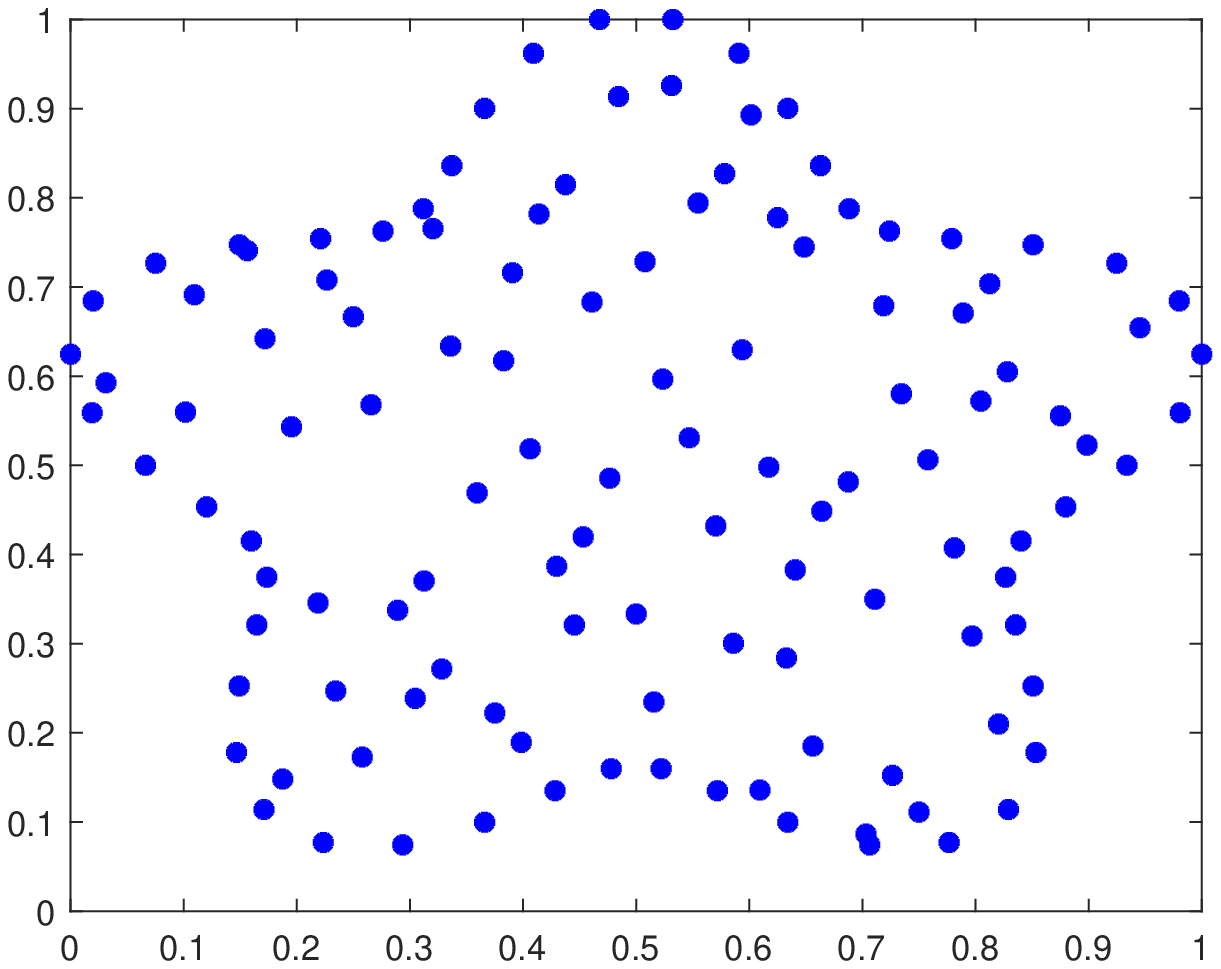}
\includegraphics[width=0.45\textwidth]{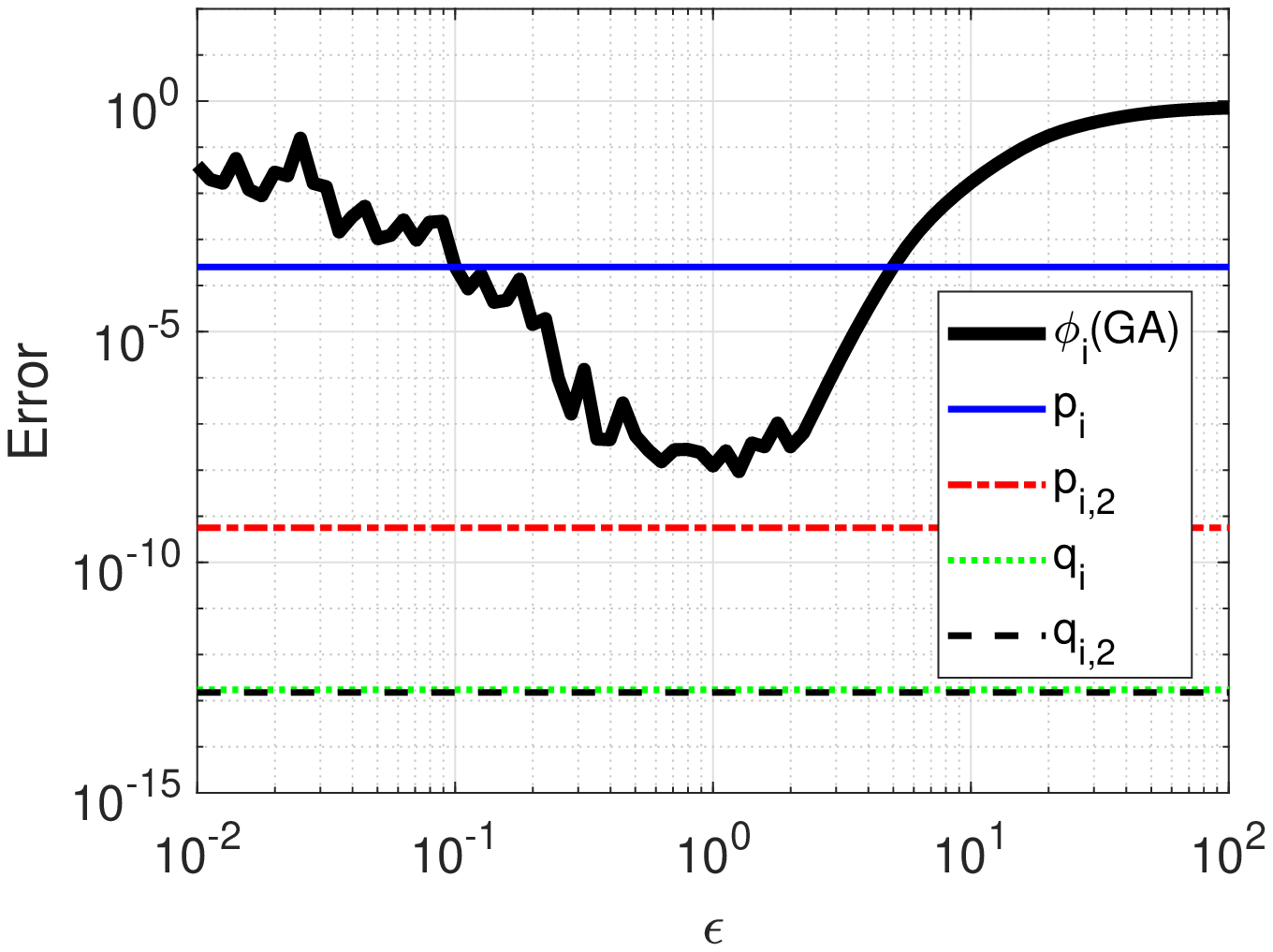}
\caption{The interpolation points for the star-shaped domain (left) and the RMSE of the interpolation (right) for the polynomial and radial basis functions versus shape parameter $\epsilon$. }
\label{fig66} 
\end{figure} 

The polynomial basis functions also are applied to interpolate three dimensional data points demonstrated in Figure \ref{fig666} (left). In this case study, $N=1331$ semi-regular center points are selected in $[0,1]^3$ as interpolation points and the error of interpolation is calculated by RMSE formulation (\ref{err}) when exact solution is
\[
u(x,y,z)=\exp(x+y+z).
\]
The error is shown in Figure \ref{fig666} (right) and it reveals that regularized basis functions $q_i$ and $q_{i,2}$ are significantly more accurate than that of the other polynomial and radial basis functions. 

\begin{figure}[ht]
\centering 
\includegraphics[width=0.45\textwidth]{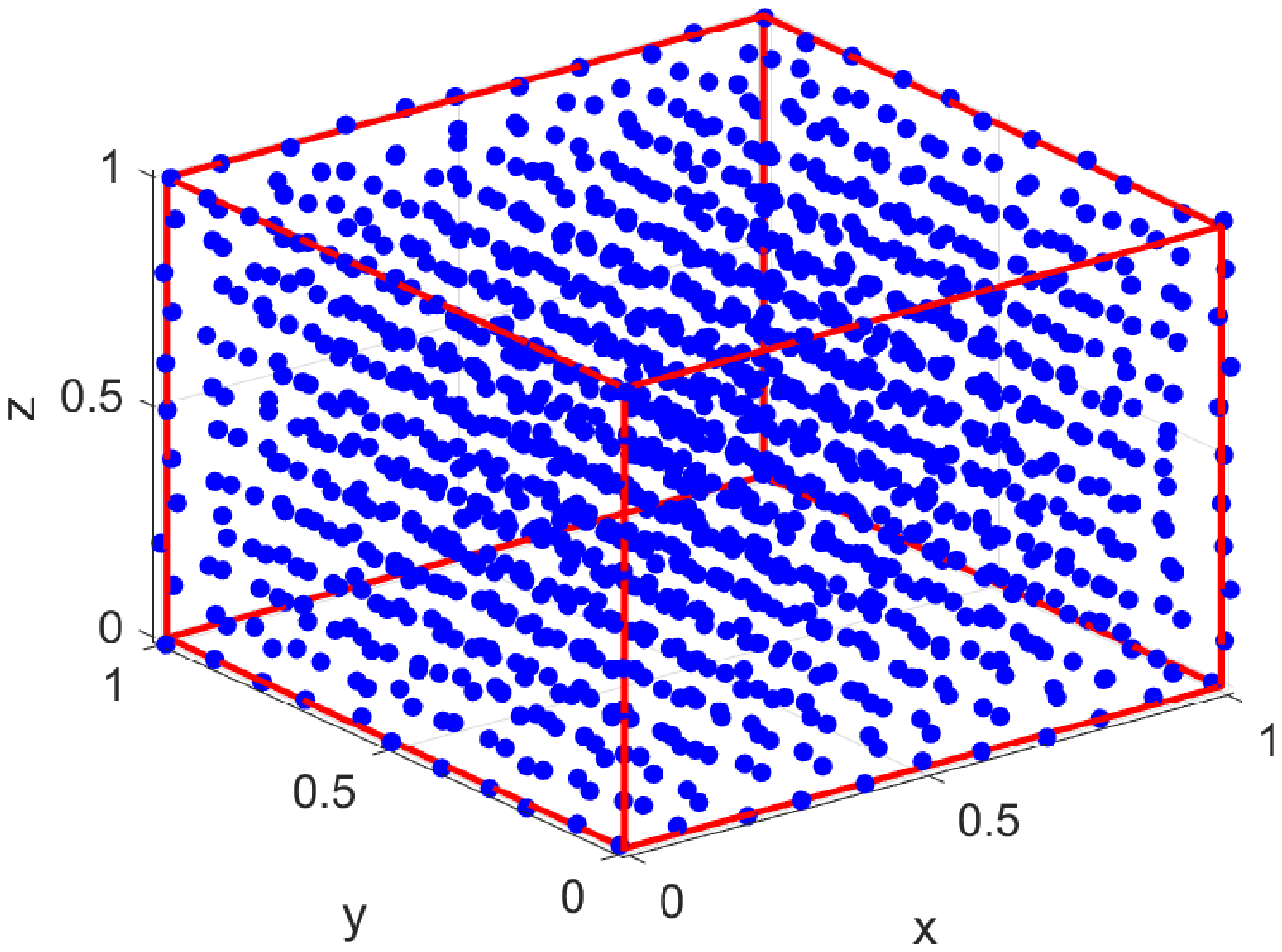}
\includegraphics[width=0.45\textwidth]{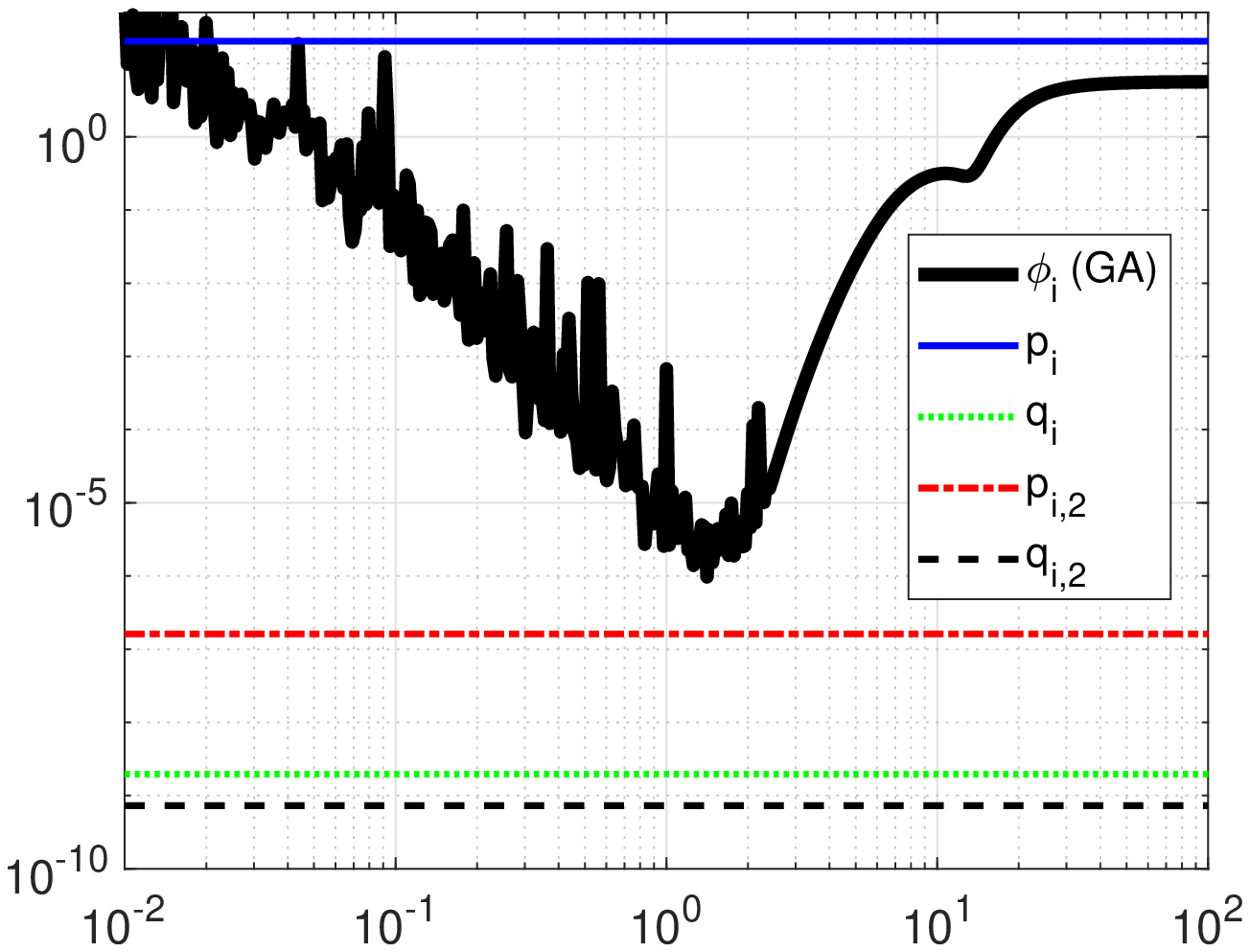}
\caption{The interpolation points for the cube domain (left) and the RMSE of the interpolation (right) for the polynomial and radial basis functions  versus shape parameter $\epsilon$.}
\label{fig666} 
\end{figure}

\subsection{The use of $\mathcal{H}_n$ for solving PDEs}\label{sec53}
To see the efficiency of polynomial basis functions proposed in this paper for solving PDEs, consider Poisson equation
\[
\Delta u (\mbf{x}) = f(\mbf{x}) ,
\]
for $\mbf{x}\in [0,1]^2$. The problem is considered to be solved with a collocation method. The exact solution and right hand side functions are assumed as $u(x,y)=\sin(x+y)$ and $f(x,y)=-2 \sin(x+y)$, respectively, and $N=441$ center points are selected in $[0,1]^2$ regularly and irregularly as is shown in Figure \ref{fig6}. We are looking for function $\bar{u}$ satisfying the PDE and Dirichlet boundary condition at internal and boundary collocation points, respectively. Then the numerical solution $\bar{u}(\mbf{x})=\sum_{k=1:N} \lambda_k \phi(r_k(\mbf{x}))$ should satisfy equations:
\begin{align*}
\Delta \bar{u}(\mbf{x}_i)  &= f(\mbf{x}_i) ,~~~ \mbf{x}_i\in\Omega , \\ 
 \bar{u}(\mbf{x}_i) &= u(\mbf{x}_i) ,~~~ \mbf{x}_i\in\Gamma ,
\end{align*}
where $\Gamma$ is the boundary of $\Omega=[0,1]^2$. Unknown coefficient vector $\bsy{\lambda}=[\lambda_1, \lambda_2, ..., \lambda_N]$ is determined by solving system of linear equations $\mbf{C} \bsy{\lambda} = \mbf{b}$ where $\mbf{C}$ and $\mbf{b}$ are evaluated as
$
\mbf{C}[i,j]= \Delta \phi(r_i(\bdx_j)) ~, 
\mbf{b}[i]=f(\bdx_j)  ~
$
when $\mbf{x}_i\in\Omega$ and
$
\mbf{C}[i,j]=  \phi(r_i(\bdx_j)) ~, 
\mbf{b}[i]=u(\bdx_j)  ~
$
when $\mbf{x}_i\in\Gamma$.
The accuracy of results versus shape parameter, $\epsilon$, is presented in Figure \ref{fig7}. The results of this figure are obtained by considering all basis functions  $p_i, p_{i,2}, q_i, q_{i,2}$ and Gaussian RBF.

RMSE of the solution, $\bar{u}$, is calculated by Equation (\ref{err}) at the center points and is reported in Figure \ref{fig7} when the basis functions are polynomials $p_i, p_{i,2}, q_i, q_{i,2}$ and Gaussian RBF with several shape parameters. From the figure, the error of regularized basis functions $q_i$ and $q_{i,2}$ is smaller than $10^{-7}$ and $10^{-11}$ for the regular and Halton points, respectively. The ill-conditioning of the polynomial basis functions in regular points reduces the accuracy while this drawback is removed for the irregular points. From Figure \ref{fig7}, the error of the best polynomials is smaller than the error of the best RBFs specially for Halton points.  

\begin{figure}[ht]
    \centering 
    \includegraphics[width=0.45\textwidth]{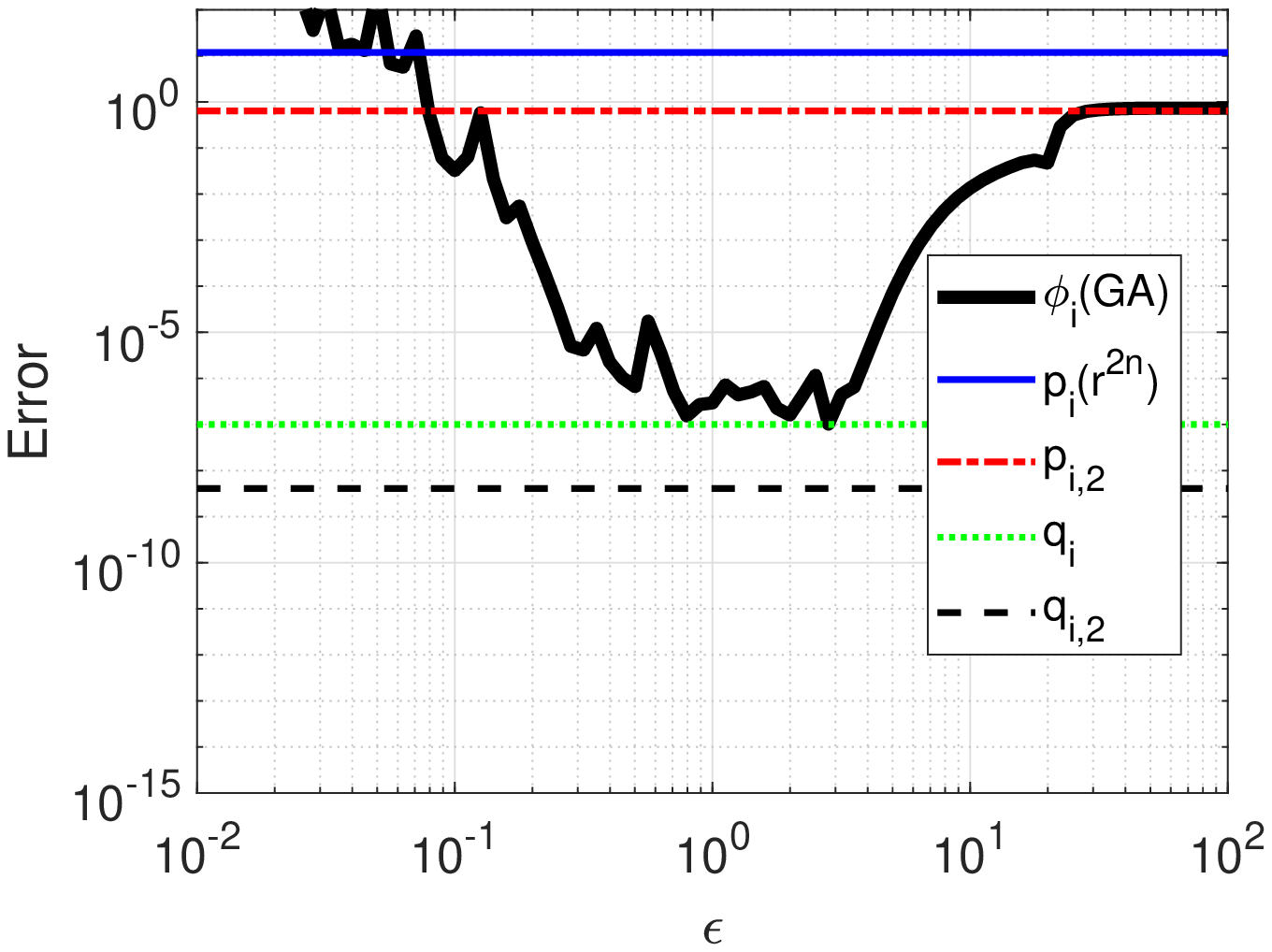}
     \includegraphics[width=0.45\textwidth]{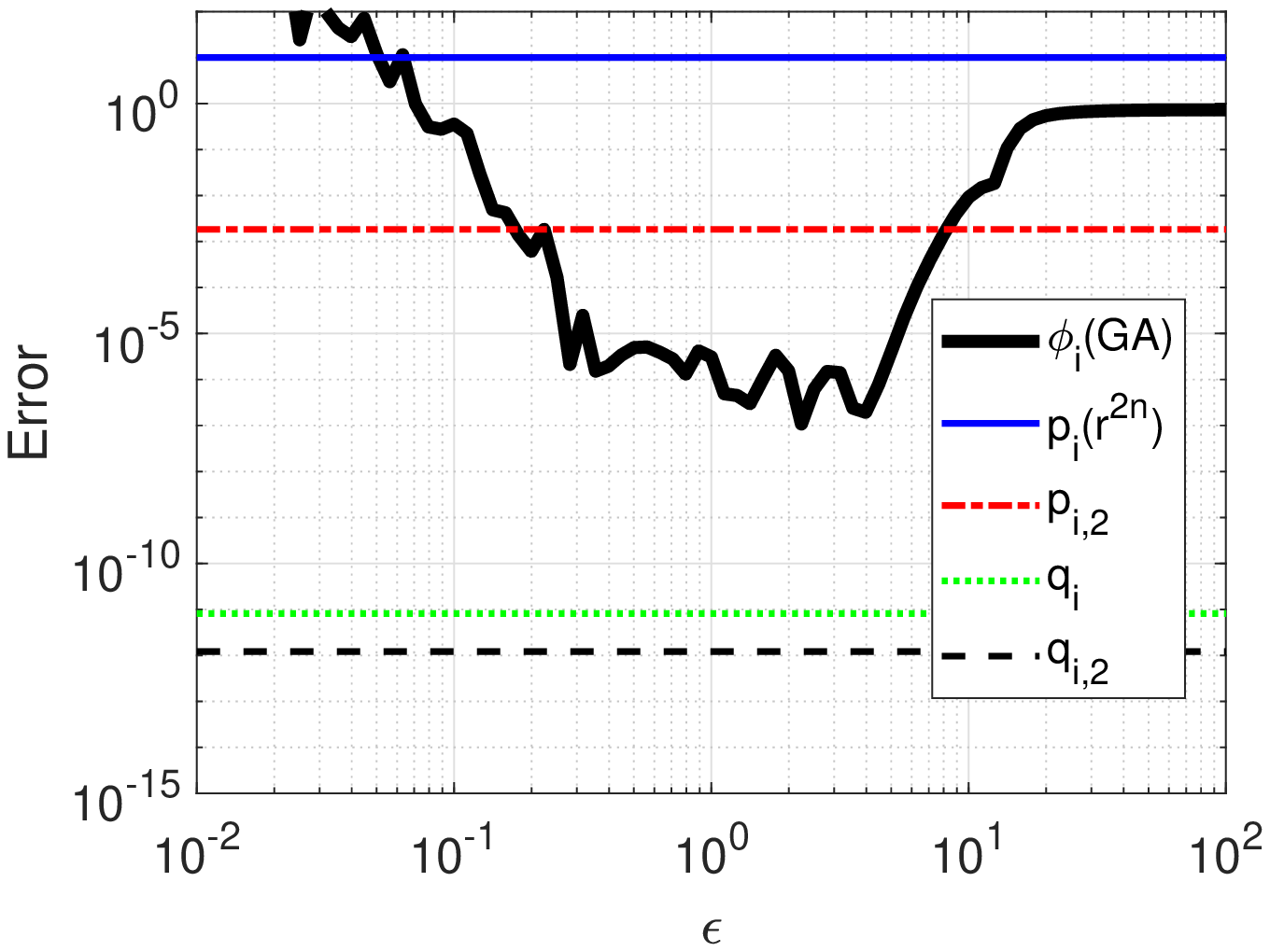}
    \caption{The RMSE for polynomial basis functions $p_i$, $p_{i,2}$, $q_i$, $q_{i,2}$ and Gaussian RBF with shape parameter $\epsilon$ to solve the PDE. The center points are regular and irregular for the left and right figures. } 
\label{fig7} 
\end{figure}

\section{Conclusion and final remarks} \label{sec6}
Motivated by Taylor’s expansion of infinitely smooth radial basis functions (RBFs), a new linear space of polynomials, denoted by $\mathcal{H}_n$, was proposed. For the purpose of analysis, a set of basis functions was obtained for the new linear space and its dimension was specified. Then, some important relations between $\mathcal{H}_n$ and the linear space of polynomials of degree $n$, $\mathcal{P}_n$ were found, analytically. Besides, some more appropriate basis functions where proposed for $\mathcal{H}_n$ to enhance its efficiency. 
The fast convergence of the proposed polynomial space to RBFs suggests the use of polynomial basis functions of the new linear space instead of smooth RBFs to interpolate scattered data and solving partial differential equations (PDEs). Several numerical examples presented in this paper verifies this fact. For the future works, the authors suggest finding new orthogonal or compact support polynomial basis functions for $\mathcal{H}_n$. Future works also can be involved investigating how this space can be helpful to enrich numerical stability and accuracy of smooth RBFs and finding a new optimal shape parameter for smooth RBFs. 




\bibliographystyle{plain}
\bibliography{mybibfile}



\end{document}